\newcommand*\patchAmsMathEnvironmentForLineno[1]{%

  \expandafter\let\csname old#1\expandafter\endcsname\csname #1\endcsname
  \expandafter\let\csname oldend#1\expandafter\endcsname\csname end#1\endcsname
  \renewenvironment{#1}%
     {\linenomath\csname old#1\endcsname}%
     {\csname oldend#1\endcsname\endlinenomath}}%
\newcommand*\patchBothAmsMathEnvironmentsForLineno[1]{%
  \patchAmsMathEnvironmentForLineno{#1}%
  \patchAmsMathEnvironmentForLineno{#1*}}%
\numberwithin{equation}{section}
\newtheorem{defi}{Definition}[section]
\newtheorem{thm}[defi]{Theorem}
\newtheorem{rem}[defi]{Remark}
\newtheorem{cor}[defi]{Corollary}
\newtheorem{problem}[defi]{Problem}
\newtheorem{asum}{Assumption}
\DeclareMathOperator{\essinf}{ess \text{ }  inf}
\DeclareMathOperator{\esssup}{ess \text{ }  sup}
\newcommand{\diff}{\,\mathrm{d}}
\newcommand{\diffns}{\mathrm{d}}
\begin{document}

\title[Maximum Principle for Markov Switching FBSDEG]{A Maximum Principle for Markov Regime-Switching Forward-Backward Stochastic Differential Games and Applications}


\author{Olivier Menoukeu-Pamen }
\address{ Institute for Financial and Actuarial Mathematics, Department of Mathematical Sciences, University of Liverpool, Peach Street, L69 7ZL, United Kingdom.}
             \email{Menoukeu@liverpool.ac.uk}

             \author{Romuald Herv\'e Momeya }
 \address{ CIBC Asset Management Inc., 1000 de la Gaucheti\`ere Ouest, Montr\'eal, Qu\'ebec,
 Canada.}
              \email{momeya2008@gmail.com}

\thanks{}

\date{ October 2014}

 \subjclass[2010]{IM00, IM50, 93E30, 91G80, 91G10, 60G51, 60HXX, 91B30}

 \keywords{Forward-backward stochastic differential equations; Markov regime-switching; Stochastic differential games; Insurance company; Optimal investment; Stochastic maximum principle.}

\maketitle


 \begin{abstract}

 In this paper, we present an optimal control problem for stochastic differential
games under Markov regime-switching forward-backward stochastic differential equations with jumps. First, we prove a sufficient maximum principle for non zero-sum
stochastic differential game problems and obtain equilibrium point for such games. Second, we prove an equivalent maximum principle for non zero-sum stochastic differential games. The zero-sum stochastic differential games equivalent maximum principle is then obtained as a corollary. We apply the obtained results to study a problem of robust utility maximization under a relative entropy penalty. We also apply the results to find optimal investment of an insurance firm under model uncertainty.
 \end{abstract}

\section{Introduction}

The expected  utility theory can be seen as the theory of decision making under uncertainty based on some postulates of agent's preferences. In general, the agent's preference is driven by a time-additive functional and a constant rate discount future reward. The standard expected utility maximization problem supposes that the agent knows the initial probability measure that governs the dynamic of the underlying. However, it is difficult or even impossible to find an individual worthwhile probability distribution of the uncertainty. Moreover, in finance and insurance, there is no conformism on which original probability should be used to model uncertainty. This led to the study of utility maximization under model uncertainty, the uncertainty here, being represented by a family of absolute continuous (or equivalent) probability distribution. The idea is to solve the problem for each distribution belonging to the above mentioned class and choose the one that gives the worst objective value. More specifically, the investor maximizes the expected utility with respect to each measure in this class, and chooses among all, the portfolio with the lowest value. This is also called a robust optimization problem  and has been intensively studied the past years. For more information, the reader may consult \cite{BMS07, ElSi2011, FMM10, JMN10, Menou20141, OS111} and references therein.

Stochastic control problem for Markovian regime-switching model has received a lot of attention recently; See e.g., \cite{Donel2011, DonHeu2011, LZ13, Menou20142, TW12, ZES2012}. Each state of the Markov chain represents a state of an economy. Hence, one can include structural changes in economic conditions of the state process. In this paper, we study an optimal control problem of recursive utility for Markov regime-switching jump-diffusion process under model uncertainty. Let mention that the notion of recursive utility was introduced in discrete time in \cite{EZ89, Wei90}, to untie the concepts of risk aversion and intertemporal substitution aversion which are not treated independently in the standard utility formulation. This concept was extended to continuous time in \cite{DE92} and called  stochastic differential utility (SDU). The performance functional in our stochastic differential utility case can be represented as the solution of controlled Markov-switching backward stochastic differential equation (BSDE). As pointed above, the agent seeks the strategy which maximizes the value functional in the worst possible choice of probability distribution. In fact, it is assumed that the mean relative growth rate of the risky asset is not known to the agent, but subject to uncertainty, hence it can be regarded as a stochastic control which plays against the agent. This problem can be seeing as a Markov switching (zero-sum) stochastic differential game between the agent and the market. Such a problem was studied in \cite{ElSi2011}, where the authors introduced a model to discuss an optimal investment problem of an insurance company using a game theoretic approach. The objective of the insurance company being to choose an optimal investment strategy so as to maximize the expected exponential utility of terminal wealth in the worst-case scenario. Their model is general enough to include financial risk, economic risk, insurance risk, and model risk. The stock prices dynamics, the interest rate and the aggregate insurance claim process are modulated by a Markov chain. The authors used the dynamic programming approach to solve the problem and derive explicit solutions. In this paper, we instead use an approach based on stochastic maximum principle, and generalize their results to the framework of (nonzero-sum) forward-backward stochastic differential games and also more general dynamics for the state process. We also obtain explicit formulas for the optimal strategies of the market and the insurance company, when the utility is of  exponential type and the Markov chain has two states. It is worth mentioning that, unlike in \cite{ElSi2011}, in our derivation of the closed forms solutions, we do not assume that the interest rate is zero.

Our paper is also motivated by the idea developed in \cite{Menou20141, Menou20142, OS111}, where the authors derive a general maximum principle for forward-backward stochastic differential games, stochastic differential games with delay and Markov regime-switching stochastic control with partial information, respectively. One important advantage of our approach is that we may relax the assumption of concavity on our Hamiltonian. We derive a general maximum principle for forward-backward Markov regime-switching stochastic differential under model uncertainty. Using this result, we study a problem of recursive utility maximization with entropy penalty. We show that the optimal solution is the unique solution to a quadratic Markov switching backward stochastic differential equation. This result extend the results in  \cite{BMS07, JMN10} by considering a Markov regime-switching state process, and more general stochastic differential utility.

The remaining of the paper is organized as follows: In Section \ref{framew}, we formulate our control problem. In Section \ref{maxiprinc1}, we derive a partial information stochastic maximum principle for forward backward stochastic differential game for a Markov switching L\'evy process under model uncertainty. In Section \ref{application}, we apply our results to study first a robust utility maximization with entropy penalty and second a problem of optimal investment of an insurance company under model uncertainty. In the latter case, explicit expressions for optimal strategies are derived.

\section{Model and Problem Formulation}\label{framew}

In this section, we formulate the general problem of stochastic differential games of Markov regime-switching forward-backward SDEs. Let $(\Omega,\mathcal{F},P)$ be a complete probability space,
where $P$ is a reference probability measure.

Let $\alpha:=\{\alpha(t)\}_{0\leq t\leq T}$ be an irreducible homogeneous continuous-time Markov chain with a finite state space $\mathbb{S}=\{e_1,e_2, \ldots ,e_D\}\subset \mathbb{R}^D$, where $D\in \mathbb{N}$, and the $j$th component of $e_n$ is the Kronecker delta $\delta_{nj}$ for each $n,j=1,\ldots, D$.  Denote by $\Lambda:=\{\lambda_{nj}:1\leq n,j\leq D\}$ the rate (or intensity) matrix of the Markov chain under $P$. Hence, for each $1\leq n,j\leq D,\,\,\lambda_{nj}$ is the transition intensity of the chain from state $e_n$ to state $e_j$ at time $t$. Recall that for $n\neq j,\,\,\lambda_{nj}\geq 0$ and $\sum_{j=1}^D \lambda_{nj}=0$, hence $\lambda_{nn}\leq 0$. It was shown in \cite{EAM94} that $\alpha$ admits the following semimartingale representation
\begin{align}\label{semi-mar1}
\alpha(t)=\alpha(0)+\int_0^t\Lambda ^T\alpha(s)\diffns s+M(t),
\end{align}
where $M:=\{M(t)\}_{t\in [0,T]}$ is a $\mathbb{R}^D$-valued martingale under the measure $P$ with respect to the filtration generated by $\alpha$ and $\Lambda ^T$ denotes the transpose of a matrix. For each $1\leq n,j\leq D$, with $n\neq j$, and $t\in [0,T]$, denote by $J^{nj}(t)$ the number of jumps from state $e_n$ to state $e_j$ up to time $t$. It can be shown (see \cite{EAM94}) that
\begin{align}\label{semi-mar2}
J^{nj}(t)=\lambda_{nj}\int_0^t\langle \alpha(s-),e_n\rangle \diffns s +m_{nj}(t),
\end{align}
where $m_{nj}:=\{m_{nj}(t)\}_{t\in [0,T]}$ with $m_{nj}(t):=\int_0^t\langle \alpha(s-),e_n\rangle \langle \diffns M(s),e_j\rangle$ is a martingale under the measure $P$ with respect to the filtration generated by $\alpha$.

Fix $j\in \{1,2,\ldots,D\}$, denote by $\Phi_j(t)$ the number of jumps into state $e_j$ up to time $t$. Then
\begin{align}\label{semi-mar3}
\Phi_j(t)&:=\sum_{n=1,n\neq j}^D J^{nj}(t)= \sum_{n=1,n\neq j}^D \lambda_{nj}\int_0^t\langle \alpha(s-),e_n\rangle \diffns s +\widetilde{\Phi}_{j}(t)\notag\\
&= \lambda_j(t) + \widetilde{\Phi}_{j}(t),
\end{align}
with $\widetilde{\Phi}_{j}(t)=\sum_{n=1,n\neq j}^D m_{nj}(t)$ and $\lambda_j(t)=\sum_{n=1,n\neq j}^D \lambda_{nj}\int_0^t\langle \alpha(s-),e_n\rangle \diffns s
$. Note that, for each
 $j\in \{1,2,\ldots,D\},\,\,\,\widetilde{\Phi}_{j}:=\{\widetilde{\Phi}_{j}(t)\}_{t\in [0,T]}$ is a martingale under the measure $P$ with respect to the filtration generated by $\alpha$.

Let  $B=\{B(t)\}_{0\leq t\leq T}$ be a Brownian motion and $\widetilde{N}_\alpha(\diffns t, \diffns z):=N(\diffns \zeta,\diffns s)-\nu_\alpha (\diffns \zeta)\diff s$ be an independent compensated Markov regime-switching Poisson random measure with $\nu_\alpha (\diffns \zeta)\diff s$, the compensator (or dual predictable projection) of $N$, defined by:
\begin{align}\label{comp1}
\nu_\alpha(\diffns \zeta)\diffns t:=\sum_{j=1}^D\langle \alpha(t-),e_j\rangle \nu_j(\diffns \zeta)\diffns t.
\end{align}
For each $j\in \{1,2,\ldots,D\},\,\,\nu_j(\diffns \zeta)$ is the conditional density of the jump size when the Markov chain $\alpha$ is in state $e_j$ and satisfies $\int_{\mathbb{R}_{0}}\min(1,\zeta^{2})\nu_j (\diffns \zeta)< \infty$,  where $\mathbb{R}_0=\mathbb{R} \backslash \{0\}$.

Suppose that the state process $X(t)=X^{(u)}(t,\omega);\,\,0 \leq t \leq T,\,\omega \in \Omega$, is a controlled Markov regime-switching jump-diffusion process of the form
\begin{equation}\left\{
\begin{array}{llll}
\diff X (t) & = & b(t,X(t),\alpha(t),u(t),\omega)\diff t +\sigma (t,X(t),\alpha(t),u(t),\omega)\diff B(t)  \\
&  & +\displaystyle \int_{\mathbb{R}_0}\gamma(t,X(t),\alpha(t),u(t),\zeta, \omega)\,\widetilde{N}_\alpha(\diffns \zeta,\diffns t)\\
& &+\eta (t,X(t),\alpha(t),u(t),\omega)\cdot \diffns \widetilde{\Phi}(t),\,\,\,\,\,\, t \in [ 0,T] , \label{eqstateprocess1} \\
X(0) &=& x_0.
\end{array}\right.
\end{equation}

We suppose that $\mathbb{F}=\{\mathcal{F}_t\}_{0\leq t\leq T}$ is the $P$-augmentation of the natural filtration associated with $B$, $N$ and $\alpha$. In our model, $u=(u_1,u_2)$, with $u_i$ being the control of player $i;\, i=1,2.$ We suppose that the different levels of information available at time $t$ to the player $i;\, i=1,2$ are modelled by two subfiltrations
\begin{align}
 \mathcal{E}^{(i)}_t\subset \mathcal{F}_t\,;\,\,\,t\in [0,T].
\end{align}
Denote by $\mathcal{A}_i$ the set of admissible control of player $i$, contained in the set of
$\mathcal{E}^{(i)}_t$-predictable processes; $i=1,2$, with value in $\mathbb{A}_i\subset \mathbb{R}$.

The functions $b,\sigma, \gamma$ and $\eta$ are given such that for all $t,\,\,\,b(t,x,e_n,u,\cdot)$, $ \sigma(t,x ,e_n,u,\cdot)$, $\gamma(t,x,e_n,u,\zeta,\cdot)$ and $\eta(t,x,e_n,u,\cdot),\,\,n=1,\ldots,D$ are $\mathcal{F}_t$- progressively measurable for all $x \in\mathbb{R},\,\,\,u\in \mathbb{A}_1\times \mathbb{A}_2$ and $\zeta \in\mathbb{R}_0$ and \eqref{eqstateprocess1} has a unique strong solution for any admissible control $u\in \mathbb{A}_1\times \mathbb{A}_2$ .

We consider the associated BSDE's in the unknowns  $\Big(Y_i(t), Z_i(t), K_i(t,\zeta), V_i(t)\Big)$ of the form
\begin{equation}\left\{
\begin{array}{llll}
\diff Y_i (t) & = & - g_i(t,X(t),\alpha(t),Y_i(t), Z_i(t), K_i(t,\cdot),V_i(t),u(t))\diff t \,+\,Z_i(t)\diff B(t)  \\
&  & + \displaystyle \int_{\mathbb{R}_0}K_i(t,\zeta)\,\widetilde{N}_\alpha(\diffns \zeta,\diffns t)+V_i(t)\cdot \diffns \widetilde{\Phi}(t);\,\,\, t \in [ 0,T] ,  \label{eqassBSDE} \\
Y_i(T) &=& h_i(X(T),\alpha(T))\,; i=1,2.
\end{array}\right.
\end{equation}
Here $g_i:[0,T]\times \mathbb{R} \times \mathbb{S} \times \mathbb{R} \times \mathbb{R} \times \mathcal{R} \times \mathbb{R} \times \mathbb{A}_1\times \mathbb{A}_2  \rightarrow \mathbb{R}$ and $h:\mathbb{R}\times \mathbb{S}  \rightarrow \mathbb{R}$ are such that the BSDE \eqref{eqassBSDE} has a unique solution for any admissible control $u \in \mathbb{A}_1\times \mathbb{A}_2$. For sufficient conditions for existence and uniqueness of Markov regime-switching BSDEs, we refer the reader to \cite{CoEl10} or \cite{Crep} and references therein.

Let $f_i:[0,T]\times \mathbb{R} \times \mathbb{S}  \times \mathbb{A}_1\times \mathbb{A}_2 \rightarrow \mathbb{R}, \,\,\,\varphi_i:\mathbb{R}\times \mathbb{S}  \rightarrow \mathbb{R}$ and $\psi_i:\mathbb{R}  \rightarrow \mathbb{R},\,i=1,2$ be given $C^1$ functions with respect to their arguments and $\psi_i^\prime(x)\geq 0,$ for all $x,\,i=1,2$. Assume that the performance functional for each player $i,\,\,i=1,2$ is as follows
\begin{align}
J_i(t,u):=E\Big[ \int_t^T  f_i(s,X(s),\alpha(s),u(s))\diff s + \varphi_i(X(T),\alpha(T))\,+\,\psi_i(Y_i(t))\Big| \mathcal{E}^{(i)}_t\Big]; i=1,2.\label{perfunct1}
\end{align}
Here, $f_i,\,\varphi_i$ and $\psi_i$ may be seen as profit rates, bequest functions and ``utility evaluations" respectively, of the player $i;\, i=1,2$. For $t=0$, we put
 \begin{align}
J_i(u):=J_i(0,u),\,\, i=1,2.\label{perfunct2}
\end{align}
 We shall consider the non-zero sum stochastic differential game problem, that is, we analyze the following:
 \begin{problem}\label{prob1}
 Find $(u_1^\ast,u_2^\ast)\in \mathcal{A}_1\times\mathcal{A}_2$ (if it exists) such that
 \begin{enumerate}
 \item $J_1(t,u_1,u_2^\ast)\leq J_1(t,u_1^\ast,u_2^\ast)$ for all $u_1\in \mathcal{A}_1$,
 \item $J_2(t,u_1^\ast,u_2)\leq J_2(t,u_1^\ast,u_2^\ast)$ for all $u_2\in \mathcal{A}_2$.
 \end{enumerate}
 \end{problem}
If it exists, we call such a pair $(u_1^\ast,u_2^\ast)$ a Nash Equilibrium. This intuitively means that while player I controls $u_1$, player II controls $u_2$. We assume that each player knows the equilibrium strategies of the other player and does not gain anything by changing his strategy unilaterally. If each player is making the best decision she can, based on the other player's decision, then we say that the two players are in Nash Equilibrium.

\section{A stochastic maximum principle for Markov regime-switching forward-backward stochastic differential games}\label{maxiprinc1}

In this section, we shall find the Nash equilibrium for Problem \ref{prob1} based on a stochastic maximum principle for Markov regime-switching forward-backward differential equation.

Define the Hamiltonians
\begin{equation*}
    H_i:[0,T] \times \mathbb{R}\times \mathbb{S}\times \mathbb{R}\times \mathbb{R}\times \mathcal{R}\times \mathbb{R} \times \mathbb{A}_1\times \mathbb{A}_2 \times\mathbb{R}  \times \mathbb{R}\times \mathbb{R} \times \mathcal{R} \times \mathbb{R}
    \longrightarrow \mathbb{R},
\end{equation*}
by
\begin{align}
   & H_i\left(t,x,e_n,y,z,k,v,u_1,u_2,a,p,q,r(\cdot),w\right)\notag\\
    :=& f_i(t,x,e_n,u_1,u_2)+a g_i(t,x,e_n,y,z,k,v,u_1,u_2)+  p_i b(t,x,e_n,u_1,u_2) \,\notag\\
    &+q_i\sigma (t,x,e_n,u_1,u_2)+\int_{\mathbb{R}_0}r_i(\zeta)\gamma(t,x,e_n,u_1,u_2,\zeta)\nu_\alpha(\diffns \zeta)\notag\\
    & +\sum_{j=1}^D\eta_j(t,x,e_n,u_1,u_2)w_n^j(t)\lambda_{nj} ,\,\,\,i=1,2\label{Hamiltoniansddg1}
\end{align}
where $\mathcal{R} $ denote the set of all functions $k:[0,T]\times \mathbb{R}_0 \rightarrow \mathbb{R}$ for which the integral in \eqref{Hamiltoniansddg1} converges. An example of such set is the set $L^2(\nu_\alpha)$. We suppose that $H_i,\,i=1,2$ is Fr\'echet differentiable in the variables $x, y, z, k, v, u$ and that $\nabla_k H_i(t,\zeta),\,i=1,2$ is a random measure which is absolutely continuous with respect to $\nu$. Next, we define the associated adjoint process $A_i(t),\,p_i(t),\,q_i(t), r_i(t,\cdot)$ and $w_i(t),\,\,\,t\in[0,T]$ and $\zeta\in\mathbb{R}$ by the following forward-backward SDE

 \begin{enumerate}
 \item The Markovian regime-switching forward SDE in $A_i(t); \,i=1,2$
 \begin{equation}\left\{
 \begin{array}{llll}
\diffns  A_i (t) & = & \dfrac{\partial H_i}{\partial y} (t) \diff t + \dfrac{\partial H_i}{\partial z} (t) \diffns B(t)+ \displaystyle \int_{\mathbb{R}_0} \dfrac{\diffns \nabla_k H_i}{\diffns \nu(\zeta)} (t,\zeta) \,\widetilde{N}_\alpha(\diffns \zeta,\diffns t)\\
 && + \nabla_vH_i(t)\cdot\diffns \widetilde{\Phi}(t);\,\,\,t\in [0,T], \\
 A_i(0) &=& \psi_i^\prime(Y(0)).\label{lambda1}
 \end{array}\right.
 \end{equation}
 Here and in what follows, we use the notation
 $$
  \dfrac{\partial H_i}{\partial y} (t) = \dfrac{\partial H_i}{\partial y} (t,X(t),\alpha(t),u_1(t),u_2(t),Y_i(t), Z_i(t), K_i(t,\cdot),V_i(t),A_i(t),p_i(t),q_i(t),r_i(t,\cdot),w_i(t)),
 $$
etc, $\dfrac{\diffns \nabla_k H_i}{\diffns \nu(\zeta)} (t,\zeta) $ is the Radon-Nikodym derivative of $ \nabla_k H_i(t,\zeta)$ with respect to $\nu(\zeta)$ and $\nabla_v H_i(t)\cdot\diffns \widetilde{\Phi  }(t)=\sum_{j=1}^D \dfrac{\partial H_i}{\partial v^j} (t)\diffns \widetilde{\Phi}_j(t)$ with $V_i^j=V_i(t,e_j)$.
 \item The Markovian regime-switching BSDE in $(p_i(t),q_i(t),r_i(t,\cdot),w_i(t)); \,i=1,2$
\begin{equation}\left\{
\begin{array}{llll}
\diffns p_i (t) & = & - \dfrac{\partial H_i}{\partial x} (t) \diffns t + q_i(t)\diff B(t)+ \displaystyle  \int_{\mathbb{R}_0} r_i (t,\zeta) \,\widetilde{N}_\alpha(\diffns\zeta,\diffns t)\\
&&+ w_i(t)\cdot\diffns \widetilde{\Phi_i}(t);\,\,\,t\in [0,T],  \\
p_i (T) &=& \dfrac{\partial \varphi_i}{\partial x}(X(T),\alpha(T))\,+A_i(T) \dfrac{\partial h_i}{\partial x} (X(T),\alpha(T))\label{ABSDE1}.
\end{array}\right.
\end{equation}
\end{enumerate}

\subsection{A sufficient maximum principle}

In what follows, we give the sufficient maximum principle.

\begin{thm}[Sufficient maximum principle for Regime-switching FBSDE games]\label{mainressuf1}
\text{} Let $(\widehat{u}_1,\widehat{u}_2)\in \mathcal{A}_1\times\mathcal{A}_2 $  with corresponding solutions $\widehat{X}(t),(\widehat{Y}_i(t), \widehat{Z}_i(t), \widehat{K}_i(t,\zeta), \widehat{V}_i(t)), \widehat{A}_i(t),(\widehat{p}_i(t),\widehat{q}_i(t),\widehat{r}_i(t,\zeta),\widehat{w}_i(t))$ of
\textup{\eqref{eqstateprocess1}}, \textup{\eqref{eqassBSDE}}, \textup{\eqref{lambda1}} and \textup{\eqref{ABSDE1}} respectively for $i=1,2$. Suppose that the following holds:
\begin{enumerate}
\item The functions
\begin{align}\label{thsuffcond1}
x \mapsto  h_i(x, e_n),\,\,x \mapsto  \varphi_i(x, e_n),\,\,y \mapsto  \psi_i(y),
\end{align}
are concave for $i=1,2$.

\item  The functions
\begin{align}\label{thsuffcond21}
 &\widetilde{\mathcal{H}}_1(t,x,e_n,y,z,k,v)\notag\\
 =&\underset{\mu_1\in \mathcal{A}_1 }{\esssup} E\Big[ H_1( t,x,\mu_1,e_n,y,z,k,v,\mu_1,\widehat{u}_2(t),\widehat{A}_1,\widehat{p}_1(t),\widehat {q}_1(t),\widehat{r}_1(t,\cdot ),\widehat{w}_1(t))\Big| \mathcal{E}^{(1)}_t\Big ]
\end{align} and
\begin{align}\label{thsuffcond22}
&\widetilde{\mathcal{H}}_2(t,x,e_n,y,z,k,v)\notag\\
=&\underset{\mu_2\in \mathcal{A}_2 }{\esssup} E\Big [ H_2( t,x,\mu_1,e_n,y,z,k,v,\widehat{u}_1(t),\mu_2,\widehat{A}_2,\widehat{p}_2(t),\widehat{q}_2(t),\widehat{r}_2(t,\cdot),\widehat{w}_2(t))\Big| \mathcal{E}^{(2)}_t\Big ]
\end{align} are all concave for all $(t,e_n) \in  [0,T]\times \mathbb{S}$ a.s.

\item \begin{align}\label{thsuffcond4}
E\Big[\hat{H}_1(t,\widehat{u}_1(t),\widehat{u}_2(t))) \Big. \Big |  \mathcal{E}^{(1)}_t\Big]=\underset{\mu_1\in \mathcal{A}_1 } {\esssup}&\Big\{E\Big[\hat{H}_1(t,\mu_1,\widehat{u}_2(t)) \Big. \Big |  \mathcal{E}^{(1)}_t\Big]\Big\} \end{align}
for all $t\in [0,T]$, a.s.\\
 and
\begin{align}\label{thsuffcond4}
E\Big[ \hat{H}_2(t,\widehat{u}_1(t),\widehat{u}_2(t)) \Big. \Big |  \mathcal{E}^{(2)}_t\Big ]=\underset{\mu_2\in \mathcal{A}_2 } {\esssup}\Big\{E\Big[\hat{H}_2(t,\widehat{u}_1(t),\mu_2(t)) \Big. \Big |  \mathcal{E}^{(2)}_t\Big ]\Big\}
\end{align}
for all $t\in [0,T]$, a.s. Here

\begin{align*}
&\hat{H}_i(t,u_1,u_2(t))\\
=&
H_i(t,\widehat{X}(t),\alpha(t),\widehat{Y}_i(t), \widehat{Z}_i(t), \widehat{K}_i(t,\cdot),\widehat{V}_i(t),u_1(t),u_2(t),\widehat{A}_i(t),\widehat{p}_i(t),
\widehat{q}_i(t),\widehat{r}_i(t,\cdot),\widehat{w}_i(t))
\end{align*}
for $i=1,2.$
\item  $\frac{\diffns }{\diffns \nu}\nabla_k\widehat{g}_i(t,\xi)>-1$ for $i=1,2.$
\item In addition, the integrability condition
\begin{align}\label{thsuffcond5}
&E\Big[\int_0^T\Big\{  \widehat{p}_i^2(t)  \Big(  (\sigma(t)-\widehat{\sigma}(t))^2+ {\int}_{\mathbb{R}_0}( \gamma(t,\zeta)-\widehat{\gamma} (t,\zeta) )^2\,\nu_\alpha(\diffns \zeta)+\sum_{j=1}^D(\eta_j(t)-\widehat{\eta}_j(t) )^2\lambda_{j}(t) \Big)\Big.    \Big.\notag\\
&+(X(t)-\widehat{X}(t))^2 \Big(  \widehat{q}_i^2(t)+  {\int}_{\mathbb{R}_0}\widehat{r}_i^2 (t,\zeta) \nu_\alpha(\diffns  \zeta)+\sum_{j=1}^D(w^j_i)^2(t)\lambda_{j}(t)  \Big)\notag\\
&+(Y_i(t)-\widehat{Y}_i(t))^2 \Big( (\dfrac{\partial \widehat{H}_i} {\partial z} )^2(t)
+  {\int}_{\mathbb{R}_0} \Big\| \nabla_k \widehat{H}_i(t,\zeta)\Big\|^2 \nu_\alpha(\diffns \zeta)+ \sum_{j=1}^D  (\dfrac{\partial \widehat{H}_i} {\partial v^j} )^2(t) \lambda_{j}(t)  \Big)\notag\\
&\Big.    \Big. +\widehat{A}_i^2(t)  \Big(  (Z_i(t)-\widehat{Z}_i(t))^2+ {\int}_{\mathbb{R}_0}( K_i (t,\zeta)-\widehat{K}_i (t,\zeta) )^2\nu_\alpha(\diffns  \zeta) + \sum_{j=1}^D(V_i^j(t)-\widehat{V}_i^j(t) )^2\lambda_{j}(t)   \Big)\Big\} \diffns t \Big]<\infty
\end{align}
for $i=1,2.$ holds.
\end{enumerate}
Then $\widehat{u}=(\widehat{u}_1(t),\widehat{u}_2(t))$ is a Nash equilibrium for \textup{\eqref{eqstateprocess1}}, \textup{\eqref{eqassBSDE}} and \textup{\eqref{perfunct1}}.
\end{thm}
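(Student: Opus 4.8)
The plan is to show, for player~$I$, that $J_1(t,\widehat u_1,\widehat u_2)-J_1(t,u_1,\widehat u_2)\geq 0$ for every admissible $u_1\in\mathcal{A}_1$; the argument for player~$II$ is symmetric, so establishing one inequality suffices. Writing $X,Y_1$ for the trajectories driven by $(u_1,\widehat u_2)$ and $\widehat X,\widehat Y_1$ for those driven by $(\widehat u_1,\widehat u_2)$, I would split the difference $J_1(t,\widehat u_1,\widehat u_2)-J_1(t,u_1,\widehat u_2)$ into three pieces $I_1+I_2+I_3$ coming from the running profit $f_1$, the terminal bequest $\varphi_1$, and the utility term $\psi_1(Y_1(t))$ respectively. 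Concavity of $\psi_1$ together with $\psi_1'\geq 0$ lets me bound $I_3$ below by $\psi_1'(\widehat Y_1(t))(\widehat Y_1(t)-Y_1(t))$, and concavity of $x\mapsto\varphi_1(x,e_n)$ bounds $I_2$ below by $\tfrac{\partial\varphi_1}{\partial x}(\widehat X(T),\alpha(T))(\widehat X(T)-X(T))$.

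First I would rewrite the integrand of $I_1$ using the definition~\eqref{Hamiltoniansddg1} of the Hamiltonian $H_1$: solving for $f_1$ expresses it as $\widehat H_1$ (evaluated along the hatted trajectory) minus the terms involving $\widehat A_1\widehat g_1$, $\widehat p_1 b$, $\widehat q_1\sigma$, the jump integral against $\widehat r_1$, and the $\widehat w_1$-terms against the $\lambda_{nj}$. The next step, which is the technical heart of the proof, is to apply the It\^o product rule for Markov-regime-switching jump-diffusions to the three products $\widehat A_1(t)(Y_1(t)-\widehat Y_1(t))$, $\widehat p_1(t)(X(t)-\widehat X(t))$, and then take conditional expectations given $\mathcal{E}^{(1)}_t$. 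The integrability hypothesis~\eqref{thsuffcond5} is exactly what guarantees the local martingale parts (the $\diffns B$, the compensated $\widetilde N_\alpha$, and the $\diffns\widetilde\Phi_j$ integrals) are true martingales with zero conditional expectation, so only the drift contributions survive. Here I must keep careful track of the orthogonality of the driving noises $B$, $\widetilde N_\alpha$ and the chain martingales $\widetilde\Phi_j$, and use that the quadratic covariation of $\Phi_j$ produces the factors $\lambda_j(t)$ appearing throughout~\eqref{thsuffcond5}; the adjoint dynamics~\eqref{lambda1} and~\eqref{ABSDE1} feed in precisely the Hamiltonian partial derivatives $\partial_yH_1$, $\partial_zH_1$, $\nabla_kH_1$, $\nabla_vH_1$, $\partial_xH_1$ needed to recombine the cross terms.

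After substituting the adjoint equations and the terminal/boundary conditions $\widehat A_1(0)=\psi_1'(\widehat Y(0))$ and $\widehat p_1(T)=\tfrac{\partial\varphi_1}{\partial x}+\widehat A_1(T)\tfrac{\partial h_1}{\partial x}$, the boundary terms will cancel against the lower bounds already obtained for $I_2$ and $I_3$, and the accumulated integrand will collapse to a single expression of the form
\begin{align*}
E\Big[\int_t^T\Big(\widehat H_1(s,\widehat u_1,\widehat u_2)-H_1(s,\ldots,u_1,\widehat u_2)-\nabla H_1(s)\cdot(\text{increments})\Big)\diffns s\,\Big|\,\mathcal{E}^{(1)}_t\Big],
\end{align*}
where $\nabla H_1$ collects the first-order terms in $(x,y,z,k,v)$. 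At this point I would invoke concavity of the maximized Hamiltonian $\widetilde{\mathcal H}_1$ from~\eqref{thsuffcond21}: by a standard argument, since $\widehat u_1$ attains the essential supremum in~\eqref{thsuffcond4}, the gradient of $\widetilde{\mathcal H}_1$ in the $u_1$-direction vanishes at $\widehat u_1$, and concavity then yields $\widetilde{\mathcal H}_1(s,X,\ldots)-\widetilde{\mathcal H}_1(s,\widehat X,\ldots)\leq\nabla\widetilde{\mathcal H}_1\cdot(\text{increments})$, which is exactly the inequality making the integrand nonnegative. The main obstacle I anticipate is not conceptual but bookkeeping: correctly handling the Radon--Nikodym derivative $\tfrac{\diffns\nabla_kH_1}{\diffns\nu}$ in the jump term and ensuring that the regime-switching martingales $\widetilde\Phi_j$, whose compensators carry the state-dependent intensities $\langle\alpha(s-),e_n\rangle\lambda_{nj}$, are treated consistently in every product rule expansion so that the surviving drift terms match~\eqref{thsuffcond5} term by term.
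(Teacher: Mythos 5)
Your proposal is correct and follows essentially the same route as the paper's own proof: the same decomposition $I_1+I_2+I_3$ into running, terminal and utility terms, the same concavity bounds on $\varphi_1$, $\psi_1$ (and, implicitly, $h_1$ at the boundary cancellation step), the same It\^o product-rule duality with the adjoint pairs $\widehat p_1(X-\widehat X)$ and $\widehat A_1(Y_1-\widehat Y_1)$ with the integrability condition \eqref{thsuffcond5} killing the martingale parts, and the same final step via concavity of the maximized Hamiltonian, which the paper implements through the standard subgradient (hyperplane) identification. The only cosmetic difference is that you condition on $\mathcal{E}^{(1)}_t$ at general $t$ whereas the paper works with unconditional expectations at $t=0$.
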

\begin{rem}
In the above Theorem and in the proof in Appendix, we have used the following shorthand
notation: For $ i = 1$, the processes corresponding to $u=( u_1,\hat{u}_2)$ are given for example by $X(t) = X^{(u_1,\hat{u}_2)}(t)$ and $Y_1(t) = Y_1^{(u_1,\hat{u}_2)}(t)$ and the processes corresponding to $u= (\hat{u}_1,\hat{u}_2)$ are $\hat{X}(t) = X^{(\hat{u}_1,\hat{u}_2)}(t)$ and $\hat{Y}_1(t) = Y_1^{(\hat{u}_1,\hat{u}_2)}(t) $. Similar notation is used for $i=2$.
\end{rem}
\begin{rem}

Let $V$ be an open subset of a Banach space $\mathcal{X}$ and let $F: V \rightarrow \mathbb{R}$.
\begin{itemize}
\item We say that $F$ has a \textsl{directional derivative} (or Gateaux derivative) at $x\in V$ in the direction $y\in \mathcal{X}$ if
$$
D_yF(x):=\underset{\varepsilon \rightarrow 0}{\lim} \frac{1}{\varepsilon}(F(x + \varepsilon y)-F(x)) \text{ exists.}
$$
\item We say that $F$ is Fr\' echet differentiable at $x \in V$ if there exists a linear map
$$
L:\mathcal{X} \rightarrow \mathbb{R}
$$
such that
$$
\underset{\underset{h \in \mathcal{X}}{h \rightarrow 0}}{\lim} \frac{1}{\|h\|}|F(x+h)-F(x)-L(h)|=0.
$$
In this case we call $L$ the Fr\' echet derivative of $F$ at $x$, and we write
$$
L=\nabla_x F.
$$
\item If $F$ is Fr\' echet differentiable, then $F$ has a directional derivative in all directions
$y \in \mathcal{X}$ and
$$
D_yF(x)= \nabla_x F(y).
$$
\end{itemize}

\end{rem}

\begin{proof}
See Appendix.
\end{proof}

\subsection{An equivalent maximum principle}
The concavity condition on the Hamiltonians does not always hold on many applications. In this section, we shall prove an equivalent stochastic maximum principle which does not require this assumption. We shall assume the following:
\begin{asum}\label{assumces1}
For all $t_0\in [0,T]$ and all bounded $\mathcal{E}^{(i)}_{t_0}$-measurable random variable $\theta_i(\omega)$, the control process $\beta_i$ defined by
\begin{align}\label{eqbeta1}
\beta_i(t):= \chi_{]t_0,T[}(t)\theta_i(\omega);\,\,t\in [0,T],
\end{align}
belongs to $\mathcal{A}_i,\, i=1,2$.
\end{asum}

\begin{asum}\label{assumces2}
For all $u_i \in \mathcal{A}_i$ and all bounded $\beta_i \in \mathcal{A}_i$, there exists $\delta_i>0$ such that
\begin{align}\label{eqbeta1}
\widetilde{u}_i(t):=u_i(t)+\ell \beta_i(t) \,\,t\in [0,T] ,
\end{align}
belongs to $\mathcal{A}_i$ for all $\ell \in ]-\delta_i,\delta_i[,\, i=1,2$.
\end{asum}

\begin{asum}\label{assumces3}
For all bounded $\beta_i \in \mathcal{A}_i$, the derivatives processes
\begin{align*}
X_1(t)=\dfrac{\diffns}{\diffns \ell }X^{(u_1+\ell \beta_1,u_2)}(t)\Big. \Big|_{\ell=0};
X_2(t)=\dfrac{\diffns}{\diffns \ell }X^{(u_1,u_2+\ell \beta_2)}(t)\Big. \Big|_{\ell=0};\\
y_1(t)=\dfrac{\diffns}{\diffns \ell }Y_1^{(u_1+\ell \beta_1,u_2)}(t)\Big. \Big|_{\ell=0};
y_2(t)=\dfrac{\diffns}{\diffns \ell }Y_2^{(u_1,u_2+\ell \beta_2)}(t)\Big. \Big|_{\ell=0};\\
z_1(t)=\dfrac{\diffns}{\diffns \ell }Z_1^{(u_1+\ell \beta_1,u_2)}(t)\Big. \Big|_{\ell=0};
z_2(t)=\dfrac{\diffns}{\diffns \ell }Z_2^{(u_1,u_2+\ell \beta_2)}(t)\Big. \Big|_{\ell=0};\\
k_1(t,\zeta)=\dfrac{\diffns}{\diffns \ell }K_1^{(u_1+\ell \beta_1,u_2)}(t,\zeta)\Big. \Big|_{\ell=0};
k_2(t,\zeta)=\dfrac{\diffns}{\diffns \ell }K_2^{(u_1,u_2+\ell \beta_2)}(t,\zeta)\Big. \Big|_{\ell=0};\\
v_1^j(t)=\dfrac{\diffns}{\diffns \ell }V_1^{j,{(u_1+\ell \beta_1,u_2)}}(t)\Big. \Big|_{\ell=0},\,\,\,j=1,\ldots, n;
v_2^j(t)=\dfrac{\diffns}{\diffns \ell }V_2^{j,{(u_1,u_2+\ell \beta_1)}}(t)\Big. \Big|_{\ell=0},\,\,\,j=1,\ldots, n
\end{align*}
exist and belong to $L^2([0,T] \times \Omega)$.
\end{asum}
It follows from \eqref{eqstateprocess1} and \eqref{eqassBSDE} that

\begin{equation}\left\{
\begin{array}{llll}
\diffns X_1(t) & =&X_1(t) \Big\{\dfrac{\partial b}{\partial x}(t)+\dfrac{\partial \sigma}{\partial x}(t)+ \displaystyle  \int_{\mathbb{R}_0} \dfrac{\partial \gamma}{\partial x}(t,\zeta)\widetilde{N}_\alpha(\diffns t,\diffns \zeta) + \dfrac{\partial \eta}{\partial x}(t)\cdot\diffns \widetilde{\Phi}(t) \Big\}\\
&&+\beta_1(t)\Big\{\dfrac{\partial b}{\partial u_1}(t)+\dfrac{\partial \sigma}{\partial u_1}(t)+ \displaystyle  \int_{\mathbb{R}_0} \dfrac{\partial \gamma}{\partial u_1}(t,\zeta)\widetilde{N}_\alpha(\diffns t,\diffns \zeta) +\dfrac{\partial \eta}{\partial u_1}(t)\cdot \diffns \widetilde{\Phi}(t)  \Big\};\,\,t\in(0,T] \\
X_1(0)&=&0.\label{derivstate1}
\end{array}\right.
\end{equation}
and
\begin{equation}\left\{
\begin{array}{llll}
\diffns y_1(t)&=&-\Big\{\dfrac{\partial g_1}{\partial x}(t)x_1(t)+\dfrac{\partial g_1}{\partial y}(t)y_1(t)+\dfrac{\partial g_1}{\partial z}(t)z_1(t)+\displaystyle  \int_{\mathbb{R}_0}\nabla_k g_1 (t)k_1(t,\zeta)\nu_\alpha(\diffns \zeta)\Big. \\
&&+\sum_{j=1}^D \dfrac{\partial g_1}{\partial v_1^j}(t)v_1^j(t)\lambda_j(t)+\dfrac{\partial g_1}{\partial u}(t)\beta_1(t)\Big\} \diffns t +z_1(t)\diff  B(t)  \\
&&+\displaystyle \int_{\mathbb{R}_0}k_1(t,\zeta) \widetilde{N}_\alpha(\diffns \zeta, \diffns t) + v_1(t)\cdot\diffns \widetilde{\Phi}(t) ;\,\,\,t\in [0,T]\\
y_1(T)&=&\dfrac{\partial h_1}{\partial x}(X(T),\alpha(T))x_1(T) .\label{derivassBSDE1}
\end{array}\right.
\end{equation}
We can obtain $\diffns X_2(t)$ and $\diffns y_2(t)$ in a similar way.

\begin{rem}\label{rem111}
As for sufficient conditions for the existence and uniqueness of solutions \eqref{derivstate1} and \eqref{derivassBSDE1}, the reader may consult \textup{\cite[Eq. (4.1)]{Pen93} }(in the case of diffusion state process).

As an example, a set of sufficient conditions under which \eqref{derivstate1} and \eqref{derivassBSDE1} admit a unique solution is as follows:

\begin{enumerate}
\item Assume that the coefficients $b,\sigma, \gamma, \eta, g_i, h_i,f_i, \psi_i$ and $\phi_i$ for $i=1,2$ are continuous with respect to their arguments and are continuously differentiable  with respect to $(x,y,z,k,v,u)$. (Here, the dependence of $g_i$ and $f_i$ on $k$ is trough $\int_{\mathbb{R}_0}k(\zeta)\rho(t,\zeta)\nu(\diffns \zeta)$, where $\rho$ is a measurable function satisfying $0\leq \rho(t,\zeta)\leq c(1\wedge |\zeta|), \text{  } \forall \zeta\in\mathbb{R}_0$. Hence the differentiability in this argument is in the Fr\'echet sense.)

\item The derivatives of $b,\sigma, \gamma, \eta$ and $h_i,g_i,\,i=1,2$ are bounded.

\item The derivatives of $f_i,\,i=1,2$ are bounded by $C(1+|x|+|u|)$.

\item The derivatives of $\psi_i$ and $\phi_i$ with respect to $x$ are bounded by $C(1+|x|).$
\end{enumerate}

\end{rem}

We can state the following equivalent maximum principle

\begin{thm}[Equivalent Maximum Principle]\label{theomainneccon1}
Let $u_i\in \mathcal{A}_i$ with corresponding solutions $X(t)$ of \textup{\eqref{eqstateprocess1}}, $(Y_i(t),Z_i(t),K_i(t,\zeta),V_i(t))$ of \textup{\eqref{eqassBSDE}}, $A_i(t)$ of \textup{\eqref{lambda1}}, $(p_i(t),q_i(t),r_i(t,\zeta),w_i(t))$ of \textup{\eqref{ABSDE1}} and corresponding derivative processes $X_i(t)$ and $(y_i(t),z_i(t),k_i(t,\zeta),v_i(t))$ given by \textup{\eqref{derivstate1}} and \textup{\eqref{derivassBSDE1}}, respectively. Suppose that
\textup{Assumptions \ref{assumces1}, \ref{assumces2}} and \textup{\ref{assumces3}} hold. Moreover, assume the following integrability conditions

 \begin{align}
&E\Big[\int_0^T p_i^2(t)\Big\{\Big(\dfrac{\partial \sigma}{\partial x}\Big)^2(t)X^2_i(t) +\Big(\dfrac{\partial \sigma}{\partial u_i}\Big)^2(t)\beta_i^2(t) \Big.\notag\\
&\Big.+\int_{\mathbb{R}_0}\Big( \Big(\dfrac{\partial \gamma}{\partial x}\Big)^2(t,\zeta)X_i^2(t)+\Big(\dfrac{\partial \gamma}{\partial u_i}\Big)^2(t,\zeta)\beta_i^2(t)\Big)\nu_\alpha (\diffns \zeta)\Big.\notag\\
&\Big.+\sum_{j=1}^D \Big(\Big(\dfrac{\partial \eta^j}{\partial x}\Big)^2(t)x^2_i(t)+\Big(\dfrac{\partial \eta^j}{\partial u_i}\Big)^2(t)\beta_i^2(t)\Big)\lambda_j(t)\Big\} \diffns t\notag\\
&+\int_0^TX_i^2(t)\Big\{ q_i^2(t)+\int_{\mathbb{R}_0}r_i^2(t,\zeta)\nu_\alpha(\diffns \zeta)+\sum_{j=1}^D  (\eta^j)^2(t)\lambda_j(t)\Big\}\diffns t\Big] <\infty \label{thneccond1}
\end{align}
and
\begin{align}
&E\Big[\int_0^Ty_i^2(t)  \Big\{ (\dfrac{\partial H_i}{\partial z})^2 (t) \,+\,\int_{\mathbb{R}_0} \|\nabla_k H_i\|^2(t,\zeta)\nu_\alpha (\diffns\zeta)+  \sum_{j=1}^D(\dfrac{\partial H_i}{\partial v^j})^2 (t) \lambda_j(t)\Big\}\diffns t\notag\\
&+ \int_0^TA_i^2(t)\Big\{z_i^2(t)+\int_{\mathbb{R}_0}k_i^2(t,\zeta)\nu_\alpha (\diffns \zeta)+ \sum_{j=1}^D (v^j_i)^2(t)\lambda_j(t)\Big\}\diffns t\Big]<\infty \text{ for } i=1,2.
\label{thneccond12}
\end{align}
Then the following are equivalent:

 \textup{(1)} $\dfrac{\diffns}{\diffns \ell}J_1^{(u_1+\ell \beta_1,u_2)}(t)\Big. \Big|_{\ell=0}=\dfrac{\diffns}{\diffns \ell}J_2^{(u_1,u_2+\ell \beta_2)}(t)\Big. \Big|_{\ell=0}=0$
 for all bounded  $\beta_1\in \mathcal{A}_{1},\,\beta_2\in \mathcal{A}_{2}$

\textup{(2)}
 \begin{align}\label{thnecond3}
0=E\Big[\dfrac{\partial H_1}{\partial \mu_1} (t,X(t),\alpha(t),\mu_1,u_2,Y_1(t), Z_1(t), K_1(t,\cdot),V_1(t),\notag&\\
A_1(t),p_1(t),q_1(t),r_1(t,\cdot),w_1(t))\Big. \Big| \mathcal{E}^{(1)}_t\Big]_{\mu_1=u_1(t)}&\notag\\
=E\Big[\dfrac{\partial H_2}{\partial \mu_2} (t,X(t),\alpha(t),u_1,\mu_2,Y_2(t), Z_2(t), K_2(t,\cdot),V_2(t),&\notag\\
A_2(t),p_2(t),q_2(t),r_2(t,\cdot),w_2(t))\Big. \Big| \mathcal{E}^{(2)}_t\Big]_{\mu_2=u_2(t)}&
\end{align}
for a.a. $t \in [0,T].$

\end{thm}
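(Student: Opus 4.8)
The plan is to show, for each player separately, that the Gâteaux derivative of the performance functional in an admissible direction $\beta_i$ collapses to a single linear pairing against $\partial H_i/\partial u_i$, and then to extract the pointwise condition \eqref{thnecond3} from the arbitrariness of $\beta_i$ granted by Assumptions \ref{assumces1}--\ref{assumces2}. I would carry out the case $i=1$; the case $i=2$ is identical after interchanging the two controls and the filtrations $\mathcal{E}^{(1)},\mathcal{E}^{(2)}$, and it suffices to treat $t=0$, the conditional statement for general $t$ following by running the same computation on $[t,T]$ and conditioning on $\mathcal{E}^{(i)}_t$. First I would differentiate \eqref{perfunct1} under the expectation — legitimate by Assumption \ref{assumces3} and the bounds \eqref{thneccond1}--\eqref{thneccond12} — to obtain
\begin{align*}
\frac{\diffns}{\diffns\ell}J_1^{(u_1+\ell\beta_1,u_2)}\Big|_{\ell=0}=E\Big[\int_0^T\big(\partial_x f_1\,X_1+\partial_{u_1}f_1\,\beta_1\big)\diffns s+\partial_x\varphi_1(X(T),\alpha(T))X_1(T)+\psi_1'(Y_1(0))y_1(0)\Big],
\end{align*}
with $X_1,(y_1,z_1,k_1,v_1)$ the derivative processes \eqref{derivstate1}--\eqref{derivassBSDE1}. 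The whole point is then to eliminate the terms carrying $X_1$ and $y_1$ in favour of the adjoint processes.

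The engine is two applications of the Itô product rule for Markov regime-switching jump diffusions. Applying it to $p_1(s)X_1(s)$ on $[0,T]$ (with $X_1(0)=0$) and substituting \eqref{derivstate1} and the adjoint BSDE \eqref{ABSDE1}, the quadratic-covariation contributions of the $B$, $\widetilde{N}_\alpha$ and $\widetilde{\Phi}$ integrals produce precisely the terms $q_1\partial_x\sigma$, $\int_{\mathbb{R}_0}r_1\partial_x\gamma\,\nu_\alpha(\diffns\zeta)$ and $\sum_j w_1^j\partial_x\eta^j\lambda_j$ together with their $\beta_1$-analogues. Since by \eqref{Hamiltoniansddg1} the drift of $p_1$ is $-\partial_x H_1$ and $\partial_x H_1$ collects exactly $\partial_x f_1+A_1\partial_x g_1+p_1\partial_x b+q_1\partial_x\sigma+\int_{\mathbb{R}_0}r_1\partial_x\gamma\,\nu_\alpha(\diffns\zeta)+\sum_j w_1^j\partial_x\eta^j\lambda_j$, every $X_1$-proportional term cancels except $-X_1(\partial_x f_1+A_1\partial_x g_1)$, while the $\beta_1$-proportional terms reassemble into $\partial_{u_1}H_1-\partial_{u_1}f_1-A_1\partial_{u_1}g_1$. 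Taking expectations kills the stochastic integrals by \eqref{thneccond1}, and the terminal conditions $p_1(T)=\partial_x\varphi_1+A_1(T)\partial_x h_1$ and $y_1(T)=\partial_x h_1\,X_1(T)$ give
\begin{align*}
E\big[\partial_x\varphi_1\,X_1(T)+A_1(T)y_1(T)\big]=E\Big[\int_0^T\Big(-X_1\partial_x f_1-A_1X_1\partial_x g_1+\beta_1\big(\partial_{u_1}H_1-\partial_{u_1}f_1-A_1\partial_{u_1}g_1\big)\Big)\diffns s\Big].
\end{align*}
Running the same computation on the dual pair $A_1(s)y_1(s)$ — now with $A_1(0)=\psi_1'(Y(0))$ from \eqref{lambda1} — and using that $H_1$ depends on $(y,z,k,v)$ only through $A_1 g_1$, so that $\partial_y H_1=A_1\partial_y g_1$, $\partial_z H_1=A_1\partial_z g_1$, $\nabla_k H_1=A_1\nabla_k g_1$ and $\partial_{v^j}H_1=A_1\partial_{v^j}g_1$, all the $y_1,z_1,k_1,v_1$ contributions cancel and only the forcing terms survive:
\begin{align*}
E[A_1(T)y_1(T)]-E[\psi_1'(Y(0))y_1(0)]=E\Big[\int_0^T\big(-A_1X_1\partial_x g_1-A_1\beta_1\partial_{u_1}g_1\big)\diffns s\Big].
\end{align*}

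Subtracting the second identity from the first makes the $A_1(T)y_1(T)$, the $A_1X_1\partial_x g_1$ and the $A_1\beta_1\partial_{u_1}g_1$ terms telescope, leaving
\begin{align*}
E\big[\partial_x\varphi_1\,X_1(T)+\psi_1'(Y(0))y_1(0)\big]=E\Big[\int_0^T\big(-X_1\partial_x f_1+\beta_1\partial_{u_1}H_1-\beta_1\partial_{u_1}f_1\big)\diffns s\Big].
\end{align*}
Feeding this back into the formula for $\frac{\diffns}{\diffns\ell}J_1|_{\ell=0}$, the $X_1\partial_x f_1$ and $\beta_1\partial_{u_1}f_1$ contributions cancel and I am left with the clean identity
\begin{align*}
\frac{\diffns}{\diffns\ell}J_1^{(u_1+\ell\beta_1,u_2)}\Big|_{\ell=0}=E\Big[\int_0^T\frac{\partial H_1}{\partial u_1}(s)\,\beta_1(s)\,\diffns s\Big],
\end{align*}
and symmetrically for $J_2$. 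The equivalence is then a standard variational argument: (2)$\Rightarrow$(1) is immediate, since $\beta_1$ is $\mathcal{E}^{(1)}$-predictable and one may condition inside the integral; for (1)$\Rightarrow$(2) I would specialize to $\beta_1(s)=\chi_{]t_0,T[}(s)\theta_1$ with $\theta_1$ bounded and $\mathcal{E}^{(1)}_{t_0}$-measurable (admissible by Assumption \ref{assumces1}), obtaining $E[\int_{t_0}^T\partial_{u_1}H_1(s)\theta_1\,\diffns s]=0$ for all such $\theta_1$ and all $t_0$; differentiating in $t_0$ and invoking the arbitrariness of $\theta_1$ yields $E[\partial_{u_1}H_1(t_0)\mid\mathcal{E}^{(1)}_{t_0}]=0$ for a.a. $t_0$, which is \eqref{thnecond3}.

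The main obstacle, and where the genuine care lies, is the bookkeeping in the two Itô product expansions: the state process carries three distinct martingale components — the Brownian $B$, the compensated Markov-switching measure $\widetilde{N}_\alpha$, and the chain-jump martingale $\widetilde{\Phi}$ — so the product rule contributes three separate quadratic-covariation sums, and one must verify that each matches a term of $\partial_x H_1$ or $\partial_{u_1}H_1$ \emph{exactly}; the $\widetilde{\Phi}$ covariation in particular must reproduce the $\lambda_j$-weights that appear in \eqref{Hamiltoniansddg1}. A second delicate point is the $k$-variable, which enters $g_i$ and $H_i$ only through a Fréchet derivative and an integral against $\nu_\alpha$, so the cancellation there has to be phrased through the Radon--Nikodym density $\diffns\nabla_k H_i/\diffns\nu$ introduced in \eqref{lambda1}. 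Finally, the integrability hypotheses \eqref{thneccond1}--\eqref{thneccond12} are exactly what guarantees that every stochastic integral left after the product rule is a true martingale with vanishing expectation; checking this, rather than any conceptual subtlety, is where the real work of the proof resides.
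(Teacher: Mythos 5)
Your proposal is correct and follows essentially the same route as the paper's own proof: the same decomposition of $\tfrac{\diffns}{\diffns\ell}J_1|_{\ell=0}$ into the $f_1$-, $\varphi_1$- and $\psi_1$-terms, the same two It\^o product-rule computations (on $p_1X_1$, using $p_1(T)=\partial_x\varphi_1+A_1(T)\partial_x h_1$ and $y_1(T)=\partial_x h_1\,X_1(T)$, and on $A_1y_1$, using $A_1(0)=\psi_1'(Y(0))$), the same cancellation of every $X_1,y_1,z_1,k_1,v_1^j$-coefficient by the definition of $H_1$ leaving $E\bigl[\int_0^T\partial_{u_1}H_1(t)\beta_1(t)\diffns t\bigr]$, and the same variational step with $\beta_1=\theta_1\chi_{]t_0,T[}$ followed by differentiation in $t_0$. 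The only (cosmetic) differences are that you perform the cancellations inside each product-rule identity via $\partial_yH_1=A_1\partial_yg_1$, etc., where the paper groups all coefficients at once after substitution, and your direct conditioning argument for (2)$\Rightarrow$(1) replaces the paper's approximation of bounded $\beta_i$ by linear combinations of controls of the form \eqref{eqbeta1}.
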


\begin{proof}
See Appendix.
\end{proof}

\begin{rem}
The result is the same if we start from $t\geq 0$ in the performance functional, hence extending \textup{\cite[Theorem 2.2]{OS111}} to the Markov regime-switching setting.
\end{rem}

\subsection*{Zero-sum Game}

In this section, we solve the \textsl{zero-sum} Markov regime-switching forward-backward stochastic differential games problem (or \textsl{worst case scenario optimal problem}): That is, we assume that the performance functional for Player II is the negative of that of Player I, i.e.,

\begin{align}
&J(t,u_1,u_2)=J_1(t,u_1,u_2)\nonumber\\
  &:=E\Big[ \int_t^T  f(s,X(s),\alpha(s),u_1(s),u_2(s))\diff s  + \varphi(X(T),\alpha(T))\,+\,\psi(Y(t))\Big.\Big| \mathcal{F}_t\Big]\nonumber \\
        &=:-J_2(t,u_1,u_2).
  \end{align}

In this case, $(u_1^\ast ,u_2^\ast )$ is a \textsl{Nash equilibrium} iff
\begin{align}\label{eqperfunctionzerosum111}
\underset{u_1 \in \mathcal{A}_1}{\esssup}\,J(t,u_1,u_2^\ast)=J(t,u_1^\ast,u_2^\ast)=\underset{u_2 \in \mathcal{A}_2}{\essinf}\,J(t,u_1^\ast,u_2).
  \end{align}
On one hand, \eqref{eqperfunctionzerosum111} implies that
\begin{align*}
 \underset{u_2 \in \mathcal{A}_2}{\essinf}(\underset{u_1 \in \mathcal{A}_1}{\esssup}\,J(t,u_1,u_2))&\leq \underset{u_1 \in \mathcal{A}_1}{\esssup}\,J(t,u_1,u_2^\ast)\\
 &=J(t,u_1^\ast,u_2^\ast)=\underset{u_2 \in \mathcal{A}_2}{\essinf}\,J(t,u_1^\ast,u_2)\\
 &\leq \underset{u_1 \in \mathcal{A}_1}{\esssup}( \underset{u_2 \in \mathcal{A}_2}{\essinf}\,J(t,u_1,u_2)).
\end{align*}
On the other hand, we always have $\essinf(\esssup) \geq \esssup(\essinf)$. Hence, if
$(u_1^\ast ,u_2^\ast )$ is a \textsl{saddle point}, then $$\underset{u_2 \in \mathcal{A}_2}{\essinf}(\underset{u_1 \in \mathcal{A}_1}{\esssup}\,J(t,u_1,u_2))=\underset{u_1 \in \mathcal{A}_1}{\esssup}( \underset{u_2 \in \mathcal{A}_2}{\essinf}\,J(t,u_1,u_2)).$$
The zero-sum Markov regime-switching forward-backward stochastic differential game problem is therefore the following:
\begin{problem}\label{problemmaxinssdgzerosum}
Find $u_1^\ast \in \mathcal{A}_1$ and $u_2^\ast \in \mathcal{A}_2$ (if it exists) such that
    \begin{align}\label{eqproblemmaxinssdgzerosum1}
     \underset{u_2 \in \mathcal{A}_2}{\essinf}(\underset{u_1 \in \mathcal{A}_1}{\esssup}\,J(t,u_1,u_2))=J(t,u_1^\ast,u_2^\ast)=\underset{u_1 \in \mathcal{A}_1}{\esssup}( \underset{u_2 \in \mathcal{A}_2}{\essinf}\,J(t,u_1,u_2)).
    \end{align}
\end{problem}
When it exists, a control $(u_1^\ast,u_2^\ast)$ satisfying \eqref{eqproblemmaxinssdgzerosum1}, is called a \textsl{saddle point}. The actions of the players are opposite, more precisely, between player I and II there is a payoff $J(t,u_1, u_2)$ and it is a reward for Player I and cost for Player II.
\begin{rem}
 As in the non-zero sum case, we shall give the result for $t=0$ and get the result for $t\in]0,T]$ as a corollary. The results obtained in this section generalize the ones in \textup{\citep{OS111, BMS07, FMM10, JMN10, ElSi2011}}.
\end{rem}
In the case of a zero-sum game, we only have one value function for the players and therefore, Theorem \ref{theomainneccon1} becomes

\begin{thm}[Equivalent maximum principle for zero-sum game]\label{theomainneccon2}
Let $u\in \mathcal{A}$ with corresponding solutions $X(t)$ of \textup{\eqref{eqstateprocess1}}, $(Y(t),Z(t),K(t,\zeta),V(t))$ of \textup{\eqref{eqassBSDE}}, $A(t)$ of \textup{\eqref{lambda1}}, $(p(t),q(t),r(t,\zeta),w_i(t))$ of \textup{\eqref{ABSDE1}} and corresponding derivative processes $X_1(t)$ and $(y_1(t),z_1(t),k_1(t,\zeta),v_1(t))$ given by \textup{\eqref{derivstate1}} and \textup{\eqref{derivassBSDE1}}, respectively. Assume that conditions of \textup{Theorem \ref{theomainneccon1}} are satisfied. Then the following statements are equivalent:

\begin{enumerate}
\item \begin{align}\label{thnecondsum1}
\dfrac{\diffns}{\diffns \ell}J^{(u_1+\ell\beta_1,u_2)}(t)\Big. \Big|_{\ell=0}=\dfrac{\diffns}{\diffns \ell }J^{(u_1,u_2+\ell \beta_2)}(t)\Big. \Big|_{\ell=0}=0
\end{align}
for all bounded $\beta_1\in \mathcal{A}_1,\,\,\,\beta_2\in \mathcal{A}_2$.

\item \begin{align}\label{thnecondsum2}
E\Big[\dfrac{\partial H}{\partial \mu_1} (t,\mu_1(t),u_2(t))\Big. \Big| \mathcal{E}_t^{(1)}\Big]_{\mu_1=u_1(t)} =E\Big[\dfrac{\partial H}{\partial \mu_2} (t,u_1(t),\mu_2(t))\Big. \Big| \mathcal{E}_t^{(2)}\Big]_{\mu_2=u_2(t)}=0
\end{align}
for a.a $ t\in[0,T]$, where
\begin{align*}
&H(t,u_1(t),u_2(t))\\
&=H((t,X(t),\alpha(t),u_1,u_2,Y(t), Z(t), K(t,\cdot),V_1(t),A(t),p(t),q(t),r(t,\cdot),w(t)).
\end{align*}

\end{enumerate}

\end{thm}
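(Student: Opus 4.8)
The plan is to derive Theorem \ref{theomainneccon2} as a direct specialization of the non-zero-sum Equivalent Maximum Principle (Theorem \ref{theomainneccon1}) to the degenerate situation in which the two performance functionals coincide up to sign, $J = J_1 = -J_2$. Since both players drive the \emph{same} state process $X$ of \eqref{eqstateprocess1} and the \emph{same} controlled BSDE $(Y,Z,K,V)$ of \eqref{eqassBSDE}, the pair of Hamiltonians collapses to a single one, $H := H_1$, with a single family of adjoint processes $(A,p,q,r,w)$ solving \eqref{lambda1} and \eqref{ABSDE1}. Thus the whole machinery of Theorem \ref{theomainneccon1} may be invoked with player II's data obtained from player I's by the sign reversal forced by $J_2 = -J$, namely $f_2 = -f$, $\varphi_2 = -\varphi$, together with $Y_2 = -Y$, $g_2 = -g$, $h_2 = -h$ and $\psi_2(\cdot) = -\psi(-\,\cdot)$, the last choice being made precisely so that the standing requirement $\psi_2' \geq 0$ is preserved.

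First I would establish that statement (1) of Theorem \ref{theomainneccon2} is nothing but statement (1) of Theorem \ref{theomainneccon1} rewritten. The first equality in \eqref{thnecondsum1} is literally the player-I condition, since $J = J_1$. For the player-II direction, $J_2 = -J$ yields $\frac{\diffns}{\diffns \ell} J_2^{(u_1,u_2+\ell \beta_2)}(t)|_{\ell=0} = -\frac{\diffns}{\diffns \ell} J^{(u_1,u_2+\ell \beta_2)}(t)|_{\ell=0}$, so the two derivatives vanish simultaneously; hence \eqref{thnecondsum1} holds iff condition (1) of Theorem \ref{theomainneccon1} holds.

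Next I would translate statement (2). By Theorem \ref{theomainneccon1}, condition (1) is equivalent to the two conditional-expectation equations \eqref{thnecond3}. Because $H_1 = H$, the first of these is exactly $E[\partial H/\partial \mu_1 \mid \mathcal{E}^{(1)}_t]_{\mu_1 = u_1(t)} = 0$. For the second I would carry out the sign bookkeeping: the coefficients $b,\sigma,\gamma,\eta$ of the common state process are unchanged, while $f_2 = -f$ and $g_2 = -g$, and a short computation on \eqref{lambda1}--\eqref{ABSDE1} (using $A_2(0) = \psi_2'(Y_2(0)) = \psi'(Y(0)) = A(0)$ and $p_2(T) = \frac{\partial \varphi_2}{\partial x} + A_2\,\frac{\partial h_2}{\partial x}$) identifies the player-II adjoint processes with the single family up to the appropriate signs. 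The upshot is that the $\mu_2$-dependence of $H_2$ is that of $H$ multiplied by an overall $-1$, so $\partial H_2/\partial \mu_2 = -\,\partial H/\partial \mu_2$ and the equation $E[\partial H_2/\partial \mu_2 \mid \mathcal{E}^{(2)}_t] = 0$ is equivalent to $E[\partial H/\partial \mu_2 \mid \mathcal{E}^{(2)}_t]_{\mu_2 = u_2(t)} = 0$. Together the two equations give precisely \eqref{thnecondsum2}, completing the equivalence.

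The main obstacle is exactly this sign bookkeeping in the third step: one must verify that the negation of the objective propagates consistently through the forward adjoint \eqref{lambda1} and the backward adjoint \eqref{ABSDE1} --- in particular that the conventions chosen for $(Y_2,g_2,h_2,\psi_2)$ are mutually compatible and keep $\psi_2' \geq 0$ --- so that the two a priori independent stationarity conditions of the non-zero-sum principle really collapse to the single pair ``$\partial H/\partial\mu_1 = \partial H/\partial \mu_2 = 0$ in conditional expectation''. An alternative that sidesteps the identification entirely is to re-run the variational computation underlying Theorem \ref{theomainneccon1} directly on the single functional $J$: differentiating $J$ in the $\beta_1$- and $\beta_2$-directions and localizing via Assumption \ref{assumces1} produces both conditions of \eqref{thnecondsum2} with the same single Hamiltonian $H$, which is conceptually the shortest route and pins down the sign unambiguously through the variational identity itself.
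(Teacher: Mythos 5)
Your proposal is correct and takes essentially the same approach as the paper: the paper's entire proof of Theorem~\ref{theomainneccon2} is the single line ``It follows directly from Theorem~\ref{theomainneccon1}.'' Your sign bookkeeping --- identifying $f_2=-f$, $\varphi_2=-\varphi$, $Y_2=-Y$, $\psi_2(\cdot)=-\psi(-\,\cdot)$, whence $A_2=A$, $(p_2,q_2,r_2,w_2)=-(p,q,r,w)$ and $\partial H_2/\partial\mu_2=-\,\partial H/\partial\mu_2$ --- simply makes explicit the specialization that the paper leaves implicit, and it checks out.
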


\begin{proof}
It follows directly from Theorem \ref{theomainneccon1}.
\end{proof}

\begin{cor}
If $u=(u_1,u_2)\in \mathcal{A}_1\times \mathcal{A}_2$ is a Nash equilibrium for the zero-sum game in \textup{Theorem \ref{theomainneccon2}}, then equalities \textup{\eqref{thnecondsum2}} holds.

\end{cor}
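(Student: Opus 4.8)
The plan is to derive the Corollary directly from Theorem \ref{theomainneccon2}, which in turn rests on the equivalence established in Theorem \ref{theomainneccon1}. The key observation is that the Nash equilibrium condition for the zero-sum game is precisely statement (1) of Theorem \ref{theomainneccon2}, and the desired conclusion \eqref{thnecondsum2} is statement (2); since these two are equivalent, the implication ``Nash equilibrium $\Rightarrow$ \eqref{thnecondsum2}'' is one half of that equivalence.

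First I would unpack what it means for $u=(u_1,u_2)$ to be a Nash equilibrium (equivalently, a saddle point, by the argument given before Problem \ref{problemmaxinssdgzerosum}) for the zero-sum game. By \eqref{eqperfunctionzerosum111}, the pair $(u_1,u_2)$ satisfies
\begin{align*}
\underset{\mu_1 \in \mathcal{A}_1}{\esssup}\,J(t,\mu_1,u_2)=J(t,u_1,u_2)=\underset{\mu_2 \in \mathcal{A}_2}{\essinf}\,J(t,u_1,\mu_2).
\end{align*}
The first equality says $u_1$ maximizes $\mu_1 \mapsto J(t,\mu_1,u_2)$ over $\mathcal{A}_1$, while the second says $u_2$ minimizes $\mu_2 \mapsto J(t,u_1,\mu_2)$ over $\mathcal{A}_2$. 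Invoking Assumptions \ref{assumces1} and \ref{assumces2}, for any bounded $\beta_1\in\mathcal{A}_1$ the perturbation $u_1+\ell\beta_1$ remains admissible for small $\ell$, so the map $\ell \mapsto J^{(u_1+\ell\beta_1,u_2)}(t)$ attains an interior maximum at $\ell=0$; by Assumption \ref{assumces3} this map is differentiable, and hence its derivative at $\ell=0$ vanishes. The same first-order argument applied to the minimization in $u_2$ gives the vanishing of $\frac{\diffns}{\diffns \ell}J^{(u_1,u_2+\ell\beta_2)}(t)|_{\ell=0}$. Together these yield statement (1) of Theorem \ref{theomainneccon2}, equation \eqref{thnecondsum1}.

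Once statement (1) is in hand, the conclusion is immediate: Theorem \ref{theomainneccon2} asserts that (1) and (2) are equivalent, so \eqref{thnecondsum2} holds for a.a.\ $t\in[0,T]$, which is exactly the assertion of the Corollary. I would close by remarking that the integrability hypotheses \eqref{thneccond1} and \eqref{thneccond12} inherited through the hypotheses of Theorem \ref{theomainneccon1} are what legitimize the interchange of differentiation and expectation underlying the equivalence.

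The only genuinely substantive point — and thus the main obstacle — is the first-order optimality step: verifying that a saddle point forces the directional (Gateaux) derivatives of the performance functional to vanish. This requires that the relevant one-sided perturbations in both the $u_1$ and $u_2$ directions stay inside the admissible sets (guaranteed by Assumptions \ref{assumces1}--\ref{assumces2}) and that the derivative processes exist in $L^2$ (Assumption \ref{assumces3}), so that $\ell=0$ is a true interior critical point of a differentiable scalar function rather than a boundary extremum. Everything after that is a formal appeal to the already-established equivalence in Theorem \ref{theomainneccon2}.
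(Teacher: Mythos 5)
Your proposal is correct and follows essentially the same route as the paper: the paper's (one-line) proof likewise observes that the saddle-point property \eqref{eqperfunctionzerosum111} forces the first-order condition \eqref{thnecondsum1}, and then invokes the equivalence of Theorem \ref{theomainneccon2} to obtain \eqref{thnecondsum2}. You have merely spelled out the interior-critical-point argument (via Assumptions \ref{assumces1}--\ref{assumces3}) that the paper leaves implicit.
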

\begin{proof}
If $u=(u_1,u_2)\in \mathcal{A}_1\times \mathcal{A}_2$ is a Nash equilibrium, then it follows from Theorem \ref{theomainneccon2} that \eqref{thnecondsum1} holds by \eqref{eqperfunctionzerosum111}.
\end{proof}

\section{Applications}\label{application}

\subsection{Application to robust utility maximization with entropy penalty}\label{Appli}

In this section, we apply the results obtained to study an utility maximization problem
under model uncertainty. We shall assume here that $\mathcal{E}^{(1)}_t=\mathcal{E}^{(2)}_t=\mathcal{F}_t$. The framework is that of \citep{BMS07}. We aim at finding a probability measure $Q\in \mathcal{Q}_{\mathcal{F}}$ that minimizes the functional
\begin{align}
E_{Q}\Big[\int_0^t a_0 S^{\kappa}(s)U_1(s)ds+\overline{a}_0 S^{\kappa} (T)U_2(T)\Big]+E_{Q}\Big[\mathcal{R}^{\kappa}(0,T)\Big ],
\end{align}
where $$\mathcal{Q}_F:=\Big\{Q|Q\ll
P,\ Q=P\, on\,\, \mathcal{F}_0\,\, and\,\, H(Q|P):=E_Q\Big[\ln \frac{\diffns Q}{\diffns P}\Big]\Big\} ,$$
with \\
$a_0$ and $\overline{a}_0$ being non-negative constants;\\
$\kappa=(\kappa(t))_{0\leq t\leq T}$ a non-negative bounded and progressively measurable;\\
$U_1=(U_1(t))_{0\leq t\leq T}$ a progressively measurable with $E_{P}\Big[\exp[\gamma_1\int_0^T|U_1(t)|\diffns t]\Big]<\infty, \, \forall \gamma_1>0$;\\
$U_2(T)$ a $\mathcal{F}_T-$measurable random variable with $E_{P}\Big[\exp[|\gamma_1 U_2(T)|]\Big]<\infty, \, \forall \gamma_1>0$;\\
$S^{\kappa}=\exp(-\int_0^t\kappa(s)\diffns s) $ is the discount factor and $\mathcal{R}^{\kappa}(t,T)$ is the penalization term, representing the sum
of the entropy rate and the terminal entropy, i.e
\begin{align}
\mathcal{R}^{\kappa}(t,T)=\frac{1}{S^{\kappa}(t)}\int_t^T\kappa(s)S^{\kappa}(s)\ln\frac{G_0^Q(s)}{G_0^Q(t)}\diffns s+\frac{S^{\kappa}(T)}{S^{\kappa}(t)}\ln\frac{G^Q(T)}{G_0^Q(t)}
\end{align}
with $G^Q=(G^Q(t))_{0\leq t\leq T}$ is the RCLL $P$-martingale representing the density of $Q$ with respect to $P$, i.e $$G^Q(t)=\frac{\diffns Q}{\diffns P}\Big|_{\mathcal{F}_t}$$
$G_T$ represents the Radon-Nikodym derivative on $\mathcal{F}_T$ of $Q$ with respect to $P$. More precisely

\begin{problem}\label{prob2} Find $Q^*\in \mathcal{Q}_{\mathcal{F}}$ such that
\begin{align}\label{probl_ap}
Y^{Q^*}(t)=\essinf_{Q\in \mathcal{Q}_{\mathcal{F}}} Y^Q(t)
\end{align}
with
 \begin{align}\label{exprY}
 Y^Q(t):= \frac{1}{S^{\kappa}(t)}E_{Q}\Big[\int_t^Ta_0 S^{\kappa }(s)U_1(s)\diffns s+\overline{a}_0S^{\kappa} (T)U_2(T)\Big|\mathcal{F}_t\Big]+E_{Q}\Big[\mathcal{R}^{\kappa}(t,T)\Big|\mathcal{F}_t\Big].
 \end{align}
\end{problem}
In the present regime switching jump-diffusion setup, we  consider the model uncertainty given by a probability measure $Q$ having a density $(G^{\theta}(t))_{0\leq t\leq T}$ with respect to $P$ satisfies the following SDE

\begin{equation}\left\{
\begin{array}{llll}
\diff G^{\theta}(t) & = & G^{\theta}(t)\Big[\theta_0(t)\diffns B(t)+\theta_1(t)\cdot \diffns \widetilde{\Phi}(t)+\displaystyle \int_{\mathbb{R}_0}\theta_2(t,\zeta)\,\widetilde{N}_\alpha(\diffns \zeta,\diffns t)\Big],\,\,\,\,\,\, t \in [ 0,T] \label{eqstateprocess_app} \\
G^{\theta}(0) &=& 1,
\end{array}\right.
\end{equation}
Using It\^o's formula, one can easily check that
\begin{eqnarray}
G^{\theta}(t)& = &\exp\Big[\int_0^t\theta_0(s)\diffns B(s)-\frac{1}{2}\int_0^t\theta^2_0(s)\diffns s+\int_0^t\int_{\mathbb{R}_0}\ln(1+\theta_2(\zeta,s))\widetilde{N}_\alpha(\diffns \zeta,\diffns s)\nonumber\\
&&+\int_0^t\int_{\mathbb{R}_0}\{\ln(1+\theta_2(s,\zeta))-\theta_2(s,\zeta)\}\nu_\alpha(\diffns \zeta)\diffns s+\sum_{j=1}^D \int_0^t\ln ( 1+\theta_{1,j}(s))\cdot \diffns \widetilde{\Phi}_j(s)\nonumber\\
&&+\sum_{j=1}^D \int_0^t\{\ln ( 1+\theta_{1,j}(s))-\theta_{1,j}(s)\}\lambda_j(s)\diffns s\Big].
\end{eqnarray}
Now, put $G^{\theta}(t,s)=\frac{G^{\theta}(s)}{G^{\theta}(t)},\, \, s\geq t$ then $(G^{\theta}(t,s))_{0\leq t\leq s\leq T}$ satisfies
\begin{equation}\left\{
\begin{array}{llll}
\diffns G^{\theta}(t,s) & = & G^{\theta}(t,s^-)\Big[\theta_0(s)\diffns B(s)+\theta_1(s)\cdot \diffns \widetilde{\Phi}(s)+\displaystyle \int_{\mathbb{R}_0}\theta_2(s,\zeta)\,\widetilde{N}_\alpha(\diffns s,\diffns \zeta)\Big],\,\,\, s \in [ t,T] \label{eqstateprocess_app2} \\
G^{\theta}(t,t) &=& 1.
\end{array}\right.
\end{equation}
Note that $\theta=(\theta_0,\theta_1,\theta_2)$ may be seen as a scenario control. Denote by $\mathcal{A}$ the set of all admissible controls $\theta=(\theta_0,\theta_1,\theta_2)$ such that

$$E\Big[\int_0^T\Big(\theta^2_0(t)+\sum_{j=1}^D\theta_{1,j}^2(t)\lambda_j(t)+\displaystyle \int_{\mathbb{R}_0}\theta_2^2(t,\zeta)\nu_\alpha(\diffns \zeta)\Big)\diffns t\Big]<\infty$$
 and $\theta_2(t,\zeta)\geq -1+\epsilon$ for some $\epsilon>0.$ \eqref{exprY} can be rewritten as
 \begin{align}\label{exprY2}
 Y^Q(t)=&E_{Q}\Big[\int_t^Ta_0e^{-\int_t^s\kappa(r)\diffns r}U_1(s)\diffns s+\overline{a}_0e^{-\int_t^T\kappa(r)\diffns r}U_2(T)\Big|\mathcal{F}_t\Big]\nonumber\\
 &+E_{Q}\Big[\int_t^T\kappa(s)e^{-\int_t^s\kappa(r)\diffns r}\ln G^{\theta}(t,s)ds+e^{-\int_t^T\kappa(r)\diffns r}\ln G^{\theta}(t,T)\Big|\mathcal{F}_t\Big]\nonumber\\
 =&E\Big[\int_t^T a_0 G^{\theta}(t,s)e^{-\int_t^s\kappa(r)\diffns r}U_1(s)\diffns s+\overline{a}_0 G^{\theta}(t,T)e^{-\int_t^T\kappa(r)\diffns r}U_2(T)\Big|\mathcal{F}_t\Big]\nonumber\\
  &+E\Big[\int_t^T\kappa(s)e^{-\int_t^s\kappa(r)\diffns r}G^{\theta}(t,s)\ln G^{\theta}(t,s)\diffns s\nonumber\\
&+e^{-\int_t^T\kappa(r)\diffns r}G^{\theta}(t,T)\ln G^{\theta}(t,T)\Big|\mathcal{F}_t\Big].
 \end{align}
Now, define $h_1$ by
\begin{align}\label{defh}
h_1(\theta(t)):=& \frac{1}{2} \theta_0^2(t)+\sum_{j=1}^D\{(1+\theta_{1,j}(t)\ln (1+\theta_{1,j}(t))-\theta_{1,j}\}\lambda_j(t)\nonumber\\
&+\int_{\mathbb{R}_0}\{(1+\theta_2(t,\zeta))\ln(1+\theta_2(t,\zeta))-\theta_2(t,\zeta)\}\nu_{\alpha}(\diffns \zeta).
\end{align}
Using  the It\^o-L\'evy  product rule, we have
\begin{align}\label{eqitolev11}
&E\Big[\int_t^T\kappa(s)e^{-\int_t^s\kappa(r)\diffns r}G^{\theta}(t,s)\ln G^{\theta}(t,s)\diffns s + e^{-\int_t^T\kappa(r)\diffns r}G^{\theta}(t,T)\ln G^{\theta}(t,T)\Big|\mathcal{F}_t\Big]\nonumber\\
=& E\Big[\int_t^Te^{-\int_t^s\kappa(r)\diffns r}G^{\theta}(t,s)h(\theta(s))ds\Big|\mathcal{F}_t\Big].
\end{align}
Substituting \eqref{eqitolev11} into \eqref{exprY2}, leads to
\begin{align}
Y^Q(t)=&E_t\Big[\int_t^T a_0 G^{\theta}(t,s)e^{-\int_t^s\kappa(r)\diffns  r}U_1(s)\diffns  s+\overline{a}_0 G^{\theta}(t,T)e^{-\int_t^T\kappa(r)\diffns  r}U_2(T)\Big]\nonumber\\
  &+E_t\Big[\int_t^T\kappa(s)e^{-\int_t^s\kappa(r)\diffns r}G^{\theta}(t,s)\ln G^{\theta}(t,s)\diffns s+e^{-\int_t^T\kappa(r)\diffns r}G^{\theta}(t,T)\ln G^{\theta}(t,T)\Big]\nonumber\\
  =&E_t\Big[\int_t^Te^{-\int_t^s\kappa(r)\diffns r}G^{\theta}(t,s)\Big(a_0 U_1(s)+h(\theta(s))\Big)\diffns s+\overline{a}_0 G^{\theta}(t,T)e^{-\int_t^T\kappa(r)\diffns r}U_2(T)\Big].
\end{align}

We have the following theorem
\begin{thm}
Suppose that the penalty function is given by \eqref{defh}. Then the optimal $Y^{Q^*}$ is such that $(Y^{Q^*},Z,W,K)$ is the unique solution to
the following quadratic BSDE

\begin{equation}\left\{
\begin{array}{llll}\label{eq_Y1}
\diff Y(t) & = & -\Big[ -\kappa(t)Y(t)+a U_1(t)-Z^2(t)+\sum_{j=1}^D\lambda_j(t)(-e^{W_j}-W_j+1)\\
&  & +\displaystyle \int_{\mathbb{R}_0}(-e^{-K(t,\zeta)}-K(t,\zeta)+1)\nu_{\alpha}\diffns \zeta\Big] \diffns t +Z(t) \diffns B(t)\\
& &+\sum_{j=1}^D W_j(t) \diffns \widetilde{\Phi}_j(t)+\displaystyle \int_{\mathbb{R}_0}K(t,\zeta)\widetilde{N}_{\alpha}(\diffns t,\diffns \zeta)\\
Y(T) &=& \overline{a}_0U_2(T),
\end{array}\right.
\end{equation}
Moreover, the optimal measure $Q^*$ solution of \textup{Problem \ref{prob2}} admits the Radon-Nikodym density $(G^{Q}(t,s))_{0\leq t\leq s\leq T}$ given by
\begin{equation}\left\{
\begin{array}{llll}\label{eq_Gtheta}
\diffns G^{\theta}(t,s) & = & G^{\theta}(t,s^-)\Big[-Z(s)\diffns B(s)+\sum_{j=1}^D(e^{-W_j}-1)\cdot \diffns \widetilde{\Phi})_j(s)\\
&&+\displaystyle \int_{\mathbb{R}_0}(e^{-K(s,\zeta)}-1)\,\widetilde{N}(\diffns s,\diffns \zeta)\Big],\,\,\, s \in [ t,T] \\
G(t,t) &=& 1.
\end{array}\right.
\end{equation}
\end{thm}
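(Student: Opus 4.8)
The plan is to recast Problem \ref{prob2} as a one-player stochastic control problem for a single controlled backward SDE driven by the scenario control $\theta=(\theta_0,\theta_1,\theta_2)$, and then to apply the maximum principle of Section \ref{maxiprinc1}. First I would rewrite the last displayed expression for $Y^Q(t)$ as a linear Markov regime-switching BSDE under $P$. Using the dynamics \eqref{eqstateprocess_app2} of the density $G^\theta(t,\cdot)$ together with the It\^o--L\'evy product rule (as already exploited in \eqref{eqitolev11}), the factor $G^\theta(t,s)$ is absorbed into a Girsanov change of drift, so that $Y^\theta:=Y^Q$ solves
\begin{equation*}
-\diffns Y(s)=g(s,Y(s),Z(s),K(s,\cdot),W(s),\theta(s))\diffns s-Z(s)\diffns B(s)-\int_{\mathbb{R}_0}K(s,\zeta)\widetilde{N}_\alpha(\diffns s,\diffns\zeta)-W(s)\cdot\diffns\widetilde{\Phi}(s),
\end{equation*}
with terminal value $Y(T)=\overline{a}_0U_2(T)$ and generator
\begin{equation*}
g(s,y,z,k,w,\theta)=-\kappa(s)y+a_0U_1(s)+h_1(\theta(s))+\theta_0(s)z+\int_{\mathbb{R}_0}\theta_2(s,\zeta)k(\zeta)\nu_\alpha(\diffns\zeta)+\sum_{j=1}^D\lambda_j(s)\theta_{1,j}(s)w_j.
\end{equation*}
The drift terms $\theta_0 z$, $\int\theta_2 k\,\nu_\alpha$ and $\sum_j\lambda_j\theta_{1,j}w_j$ arise precisely from converting the $Q$-martingales $B,\widetilde{N}_\alpha,\widetilde{\Phi}$ back to $P$-martingales, while $h_1(\theta)$ carries the entropy penalty. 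With $\psi(y)=y$ and no nontrivial forward coefficients, the performance functional is $J(\theta)=Y^\theta(0)$, so minimizing $Y^Q$ is minimizing $J$ over the scenario control.

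Next I would apply the equivalent maximum principle, Theorem \ref{theomainneccon2}. Because the forward coefficients $b,\sigma,\gamma,\eta$ and the terminal/running data $f,\varphi$ are absent in this reduced formulation, the adjoint processes $(p,q,r,w)$ of \eqref{ABSDE1} vanish and the Hamiltonian \eqref{Hamiltoniansddg1} collapses to $H=A\,g$, where $A$ solves the linear forward SDE \eqref{lambda1} with $A(0)=\psi'(Y(0))=1$ and $\partial H/\partial y=-\kappa(s)A$. Since $A$ solves a linear equation started at $1$ and is of Dol\'eans--Dade exponential type, it stays strictly positive (this is where the jump-coefficient constraint tied to admissibility enters). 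Consequently the stationarity condition \eqref{thnecondsum2}, namely $E[\partial H/\partial\theta\mid\mathcal{F}_s]=0$, is equivalent to the pointwise first-order condition $\partial g/\partial\theta=0$. Moreover, since $h_1$ is convex in $\theta$ and $A>0$, the Hamiltonian is convex in $\theta$, so a critical point is a minimizer and the sufficient maximum principle (Theorem \ref{mainressuf1}, in its minimization form) upgrades stationarity to genuine optimality.

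I would then solve the first-order conditions explicitly. Differentiating $g$ gives $\theta_0+z=0$, $\lambda_j[\ln(1+\theta_{1,j})+w_j]=0$ and $[\ln(1+\theta_2(\zeta))+k(\zeta)]=0$ on the support of $\nu_\alpha$, whence
\begin{equation*}
\theta_0^*=-Z,\qquad \theta_{1,j}^*=e^{-W_j}-1,\qquad \theta_2^*(\zeta)=e^{-K(\zeta)}-1.
\end{equation*}
Substituting $\theta^*$ into the density dynamics \eqref{eqstateprocess_app2} yields \eqref{eq_Gtheta}, and substituting $\theta^*$ into $g$ (evaluating $h_1(\theta^*)$ against the matching linear terms) collapses the linear generator into the quadratic/exponential one of \eqref{eq_Y1}; this identifies $(Y^{Q^*},Z,W,K)$ as a solution of that BSDE.

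The main obstacle is the well-posedness and the rigorous optimality, not the formal computation. I would need, first, to check that $\theta^*$ is admissible, i.e. that $\theta_2^*=e^{-K}-1\geq-1+\epsilon$ and the square-integrability defining $\mathcal{A}$ hold; this is exactly where the logarithmic transform $\theta_2=e^{-K}-1$ must be controlled and where the exponential growth of the generator bites, so one uses the boundedness of $\kappa$ and the exponential-moment hypotheses on $U_1,U_2$. Second, I would establish existence and uniqueness for the quadratic Markov regime-switching BSDE \eqref{eq_Y1} with jumps, invoking the well-posedness theory referenced earlier (see \cite{CoEl10,Crep}) after verifying the required exponential-moment/boundedness conditions. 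Finally, to conclude that this critical measure realizes the infimum in \eqref{probl_ap} rather than being a mere stationary point, I would complement the maximum principle with a comparison argument: the pointwise minimality of $g$ in $\theta$ together with the comparison principle for regime-switching BSDEs forces $Y^{\theta}(t)\geq Y^{\theta^*}(t)$ for every admissible $\theta$, which is precisely \eqref{probl_ap}.
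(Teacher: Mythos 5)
Your proposal is correct and follows essentially the same route as the paper: reduce Problem \ref{prob2} to the general framework with $X\equiv 0$, $f=\varphi=0$, $\psi=\mathrm{Id}$, represent $Y^{Q}$ as the linear regime-switching BSDE with generator $-\kappa y+a_0U_1+h_1(\theta)+\theta_0 z+\int_{\mathbb{R}_0}\theta_2 k\,\nu_\alpha(\diffns\zeta)+\sum_{j=1}^D\lambda_j\theta_{1,j}w_j$, obtain $\theta^{*}$ from the pointwise first-order conditions of the reduced Hamiltonian, use the comparison theorem for BSDEs to turn pointwise minimality of the generator into optimality of $Q^{*}$, and substitute $\theta^{*}$ back to get \eqref{eq_Y1} and \eqref{eq_Gtheta}. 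The differences are cosmetic rather than structural: you enter through the equivalent maximum principle with the explicit positive adjoint $A$ instead of citing Theorem \ref{mainressuf1} via convexity, and you additionally flag admissibility of $\theta^{*}$ and well-posedness of the quadratic BSDE (points the paper leaves implicit); your signs $\theta^{*}_{1,j}=e^{-W_j}-1$, $\theta_2^{*}=e^{-K}-1$ are the internally consistent ones, matching \eqref{eq_Gtheta}, whereas the paper's displayed first-order conditions \eqref{FOC} contain sign typos.
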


\begin{proof}
Fix $u_1$ and denote by $X(T)$ the corresponding wealth process.  One can see that \textup{Problem \ref{prob2}} can be obtained from our general control
problem by setting 
$X(t)=0,\, \forall t\in[0,T]$, $h(X(T),\alpha(T))=\overline{a}_0U_2(T)$, $f=0$, $\phi(x)=0$ and $\psi(x)=I$. Since  $h_1(\theta)$ given by (\ref{defh}) is convex in $\theta_0, \theta_1$ and $\theta_2$, it follows that conditions
 of Theorem \ref{mainressuf1} are satisfied. The Hamiltonian in this case is reduced to:
 \begin{equation}
 H(t,y,z,K,W)=\lambda (U_1(t)+h(\theta)+\theta_0 z)+\sum_{j=1}^D\lambda_j\theta_{1,j}W_j +\displaystyle \int_{\mathbb{R}_0}\theta_2(\cdot,\zeta)K(\cdot,\zeta)
\nu_{\alpha}(d \zeta)
 \end{equation}
Minimizing $H$ with respect to $\theta=(\theta_0,\theta_1,\theta_2)$ gives the first order condition of optimality for an optimal $\theta^\ast$,
\begin{equation}\left\{
\begin{array}{llll}\label{FOC}
\frac{\partial  H}{\partial \theta_0}=0 \,\,\,\text{ i.e., } \theta_0^{\ast}(t)=-Z(t),\\
\frac{\partial H}{\partial \theta_{1,j}}=0 \,\,\, \text{ i.e., }-\ln(1+\theta_{1,j}^\ast)(t)=-W_{1,j}(t)\text{ for } j=1,\ldots,D,\\
\nabla_{\theta_2}H=0  \,\,\,\text{ i.e., }-\ln(1+\theta_2^\ast)(t,\zeta)=-K(\cdot,\zeta),\,\,\,\,\nu_{\alpha}\text{- a.e.}
\end{array}\right.
\end{equation}
On the hand, one can show using product rule (see e.g., \citep{Menou20141}) that $Y$ given by \eqref{probl_ap2} is solution to the following linear BSDE
\begin{equation}\left\{
\begin{array}{llll}\label{eq_Y2}
\diff Y(t) & = & -\Big[ -\kappa(t)Y(t)+a U_1(t)+ h(\theta)+\theta_0Z(t)+\sum_{j=1}^D\theta_{1,j}(t)\lambda_j(t){W_j}\\
&  & +\displaystyle \int_{\mathbb{R}_0}\theta_2(t,\zeta)K(t,\zeta)\nu_{\alpha}\diffns \zeta\Big ]\diffns t+Z(t) dB(t)+ W(t)\cdot \diffns \widetilde{\Phi}(t)\\
& &+\displaystyle \int_{\mathbb{R}_0}K(t,\zeta)\widetilde{N}_{\alpha}(\diffns t,\diffns \zeta)\\
Y(T) &=& \overline{a}_0U_2(T),
\end{array}\right.
\end{equation}
Using comparison theorem for BSDE, $Q^\ast$ is an optimal measure for Problem \ref{prob2} if $\theta^\ast$ is such that
\begin{equation}\label{eq_g}
g(\theta^*)=\underset{\theta}\min \,g(\theta)
\end{equation}
for each $t$ and $\omega$, with $g(\theta):=h(\theta)+\theta_0Z(t)+\sum_{j=1}^D\theta_{1,j}(t)\lambda_j(t){W_j}+\displaystyle \int_{\mathbb{R}_0}\theta_2(t,\zeta)K(t,\zeta)\nu_{\alpha}\diffns \zeta$. This is equivalent to the first condition of optimality. Hence $(\theta^\ast_0,\theta^\ast_{1,1},\ldots,\theta^\ast_{1,D},\theta^\ast_2)$ satisfying \eqref{FOC} will satisfy \eqref{eq_g}. Substituting $\theta^\ast_0,\theta^\ast_{1,1},\ldots,\theta^\ast_{1,D},\theta^\ast_2$ into \eqref{eq_Y2} leads to \eqref{eq_Y1}. Furthermore, substituting $\theta^\ast_0,\theta^\ast_{1,1},\ldots,\theta^\ast_{1,D},\theta^\ast_2$ into \eqref{eqstateprocess_app2} gives \eqref{eq_Gtheta}. The proof of the theorem is complete.
\end{proof}

\begin{rem}
\leavevmode
\begin{itemize}
  \item This result can be seen as an extension to the Markov regime-switching setting of \textup{\citep[Theorem 1]{JMN10}} or \textup{\citep[Theorem 2]{BMS07}}.
  \item Let us mention that in the case $(X(t))_{0\leq t\leq T}$ is not zero and has a particular dynamics (mean-reverting or exponential Markov L\'evy switching) one can use \textup{Theorem \ref{mainressuf1}} to solve  a problem of recursive robust utility mazimization as in \textup{\citep[ Section 4.2]{OS111}} or \textup{\citep[Theorem 4.1]{Menou20141}}
\end{itemize}

\end{rem}

\subsection{Application to optimal investment of an insurance company under model uncertainty}

In this section, we use our general framework to study a problem of optimal investment of an insurance company under model uncertainty. The uncertainty here is also described by a family of probability measures. Such problem was solved in \citep{ElSi2011} using dynamic programming approach. We shall show that our general maximum principle enables us to solve the problem. We shall restrict ourselves to the case $\mathcal{E}_t^{(1)}=\mathcal{E}_t^{(2)}=\mathcal{F}_t,\,\,\,t\in [0,T]$.

The model is that of \citep[Section 2.1]{ElSi2011}. Let $(\Omega, \mathcal{F},P) $ be a complete probability space with $P$ representing a reference probability measure from which a family of real-world probability measures are generated. We shall suppose that $(\Omega, \mathcal{F},P) $ is big enough  to take into account uncertainties coming from future insurance claims, fluctuation of financial prices and structural changes in economics conditions. We consider a continuous-time Markov regime switching economic model with a bond and a stock or share index.

The evolution of the state of an economy over time is modeled by a continuous-time, finite-state, observable Markov chain $\alpha:=\{\alpha(t),t\in[0,T];\, T<\infty\}$ on $(\Omega, \mathcal{F},P)$, taking values in the state space $\mathbb{S}=\{e_1,e_2,\ldots,e_D\}$, where $D\geq 2$. We denote by $\Lambda:=\{\lambda_{nj}:1\leq n,j\leq D\}$ the intensity matrix of the Markov chain under $P$. Hence, for each $1\leq n,j\leq D,\,\,\lambda_{nj}$ is the transition intensity of the chain from state $e_n$ to state $e_j$ at time $t$. It is assumed that for $n\neq j,\,\,\lambda_{nj}> 0$ and $\sum_{j=1}^D \lambda_{nj}=0$, hence $\lambda_{nn}< 0$.
The dynamics of $(\alpha(t))_{0\leq t\leq T}$ is given in Section \ref{framew}.

 Let $r=\{r(t)\}_{t\in [0, T]}$ be the instantaneous interest rate of the money market account $B$ at time $t$. Then
\begin{equation}
r(t):=\langle \underline{r},\alpha(t)\rangle =\sum_{j=1}^D r_j\langle \alpha(t), e_j\rangle\;,
\end{equation}
where $\langle \cdot,\cdot\rangle$ is the usual scalar product in $\mathbb{R}^D$
and $\underline{r}=(r_1,\dots,r_D)\in \mathbb R^D_+$. Here the value $r_j$, the $j^{th}$ entry of the vector $\underline r$, represents the value of the interest rate when the Markov chain is in the state $e_j$, i.e., when $\alpha(t)=e_j$. The price dynamics of $B$ can now be written as
\begin{equation}\label{norisk-1}
\diffns S_0(t)=S_0r(t)\diffns  t,\, S_0(0)=1, \quad  t\in [0,T].
\end{equation}

Moreover, let $\mu=\{\mu(t)\}_{t\in [0, T]}$ and $\sigma=\{\sigma(t)\}_{t\in [0, T]}$ denote respectively the mean return and the volatility of the stock at time $t$. Using the same convention, we have
\begin{align*}
\mu(t)=&\langle \underline{\mu},\alpha(t)\rangle=\sum_{j=1}^D\mu_j\langle \alpha(t),e_j\rangle\;,\\
\sigma(t)=&\langle \underline{\sigma},\alpha(t)\rangle=\sum_{j=1}^D\sigma_j\langle \alpha(t),e_j \rangle\;,
\end{align*}
where
$$\underline{\mu}=(\mu_1,\mu_2,\ldots,\mu_D)\in \mathbb{R}^D,$$
and
$$\underline{\sigma}=(\sigma_1,\sigma_2,\ldots,\sigma_D)\in \mathbb{R_+}^D.$$

In a similar way, $\mu_j$ and $\sigma_i$ represent respectively the appreciation rate and volatility of the stock when the Markov chain is in state $e_j$, i.e., when $\alpha(t)=e_j$. Let $B=\{B_t\}_{t\in [0, T]}$ denotes the standard Brownian motion on $(\Omega,\mathcal{F},P)$ with respect  to its right-continuous complete filtration $\mathcal{F}^B:=\{\mathcal{F}^B_t\}_{0\leq t\leq T}$. Then, the dynamic of the stock price $S=\{S(t)\}_{t\in [0, T]}$ is given by the following Markov regime-switching geometric Brownian motion
\begin{equation}\label{risk-1}
\diffns S(t)=S(t)\left[\mu(t)\diffns t+\sigma(t)\diffns B(t)\right], \quad S(0)=S_0
\end{equation}
Let $Z_0:=\{Z_0(t)\}_{t\in [0, T]}$ be a real-valued Markov regime-switching pure jump process on $(\Omega,\mathcal{F},P)$. Here $Z_0(t)$ can be  considered as the aggregate amount of claims up to and including time $t$. Since $Z_0$ is a pure jump process, one has
\begin{equation}\label{pureJump}
Z_0(t)=\sum_{0<u\leq t}\Delta Z_0(u),\,\,Z_0(0)=0,\,\, P\text{-a.s, }  t\in[0,T]
\end{equation}
where for each $u\in[0,T]$, $\Delta_Z0(u)=Z_0(u)-Z_0(u^-)$, represents the jump size of $Z_0$ at time $u$.

 Assume that the state space  of claim size
 denoted by $\mathcal{Z}$ is $(0,\infty)$. Let $\mathcal{M}$ be the product space $[0,T]\times \mathcal{Z}$ of claim arrival time and claim size.
 Define a random measure $N^0(\cdot,\cdot)$ on the product space $\mathcal{M}$, which  selects claim arrivals and size $\zeta:=Z_0(u)-Z_0(u^-)$ at time $u$, then the aggregate insurance claim process $Z_0$ can be written as

\begin{equation}\label{pureprocess}
Z_0(t)=\int_0^t\int_0^{\infty} \zeta N^0(\diffns u,\diffns \zeta),\,\,\, t\in[0,T].
\end{equation}
Define, for each $t\in[0,T]$
\begin{equation}\label{pureprocessN}
N_{\Lambda^0}(t)=\int_0^t\int_0^{\infty}  N^0(\diffns u,\diffns \zeta),\,\,\,t\in[0,T].
\end{equation}
Then $N_{\Lambda^0}(t)$ counts the number of claim arrivals up to time $t$. Assume that, under the measure $P$, $N_{\Lambda^0}:=\{N_{\Lambda^0}(t)\}_{t\in [0, T]}$ is a conditional Poisson process on $(\Omega,\mathcal{F},P)$ with intensity $\Lambda^0:=\{\lambda^0(t)\}_{t\in [0, T]}$ modulated by the chain $\alpha$ given by
\begin{equation}
\lambda^0(t):=\langle \underline{\lambda}^0,\alpha(t)\rangle =\sum_{j=1}^D \lambda_j^0\langle \alpha(t), e_j\rangle\;,
\end{equation}
with $\underline{\lambda}^0=(\lambda_1^0,\ldots,\lambda^0_D)\in \mathbb R^D_+$. Here the value $\lambda_j^o$, the $j^{th}$ entry of the vector $\underline \lambda^0$, represents  the intensity rate of $N$ when the Markov chain is in the space state $e_j$, i.e., when $\alpha(t^-)=e_j$. Denote by $F_j(\zeta),\,j=1,\ldots,D$ the probability distribution of the claim size \\$\zeta:=Z_0(u)-Z_0(u^-)$ when  $\alpha(t^-)=e_j$. Then, the compensator of the Markov regime switching random measure $N^0(\cdot,\cdot)$ under $P$ is given by
\begin{align}
\nu_{\alpha}^0(\diffns u,\diffns \zeta):=\sum_{j=1}^D\langle \alpha(u^-),e_j\rangle\lambda_j^0F_j(\diffns\zeta)\diffns u.
\end{align}

Hence a compensated version $\widetilde{N}^0_{\alpha}(\cdot,\cdot)$ of the Markov regime-switching random measure is defined by
\begin{align}
\widetilde{N}^0_{\alpha}(\diffns u,\diffns \zeta)=N^0(\diffns u,\diffns \zeta)-\nu^0_{\alpha}(\diffns u,\diffns \zeta).
\end{align}
The premium rate $P_0(t)$ at time $t$ is given by
\begin{equation}
P_0(t):=\langle \underline{P_0},\alpha(t)\rangle =\sum_{j=1}^D P_{0,j}\langle \alpha(t), e_j\rangle,
\end{equation}
with $\underline{P_0}=(P_{0,1},\ldots,P_{0,D})\in \mathbb R^D_+$. Let $R_0:=\{R_0(t)\}_{t\in [0, T]}$ be the surplus process of the insurance company without investment. Then
\begin{align}
R_0(t):=&r_0+\int_0^tP_0(u)\diffns u -Z_0(t)\nonumber\\
=&r_0+\sum_{j=1}^D P_{0,j}\mathcal{J}_j(t)-\int_0^t\int_0^{\infty} \zeta N^0(\diffns  u,\diffns \zeta),\,\,\, t\in[0,T],
\end{align}
with $R_0(0)=r_0$. For each $j=1,\ldots,D$ and each $t\in[0,T]$, $\mathcal{J}_j(t)$ is the occupation time of the chain $\alpha$ in the state $e_j$ up to time $t$, that is
\begin{align}
\mathcal{J}_j(t)=\int_0^t\langle \alpha(u), e_j\rangle \diffns u.
\end{align}
The following information structure will be important for the derivation of the dynamic of the company' surplus process. Let
$\mathcal{F}^{Z_0}:=\{\mathcal{F}^{Z_0}\}_{0\leq t\leq T}$ denote the right-continuous $P$-completed  filtration generated by $Z_0$. For each $t\in [0,T]$
define $\mathcal{F}_t:=\mathcal{F}^{Z_0}_t\vee\mathcal{F}_t^{B}\vee\mathcal{F}_t^{\alpha}$ as the minimal $\sigma$-algebra generated by $\mathcal{F}^{Z_0}_t$,
$\mathcal{F}_t^{B}$ and $\mathcal{F}_t^{\alpha}$ and write $\mathbb{F}=\{\mathcal{F}_t\}_{0\leq t\leq T}$ as the information accessible to the company.

From now on, we assume that the insurance company invests the amount of $\pi(t)$ in the stock at time $t$, for each $t\in[0,T]$. Then $\pi=\{\pi(t),t\in[0,T]\}$ represents the portfolio process. Denote by $X=\{X^{\pi}(t)\}_{t\in [0, T]}$ the wealth process of the company. One can show that the dynamic of the surplus process is given by
\begin{equation}\left\{
\begin{array}{llll}\label{eq_X2}
\diff X(t) & = & \Big\{ P_0(t)+r(t)X(t)+ \pi(t)(\mu(t)-r(t))\Big\}\diffns t+\sigma(t)\pi(t)\diffns B(t)\\
&&-\displaystyle \int_0^{\infty} \zeta N^0(\diffns t,\diffns \zeta)\\
& = &  \Big\{ P_0(t)+r(t)X(t)+ \pi(t)(\mu(t)-r(t))-\displaystyle \int_0^{\infty}\zeta\nu^0_{\alpha}(\diff \zeta)\Big\}\diffns t\\
&&+\sigma(t)\pi(t)\diffns B(t)-\displaystyle \int_0^{\infty}\zeta \widetilde{N}_{\alpha}^0(\diffns t,\diffns \zeta),\,\,t\in [0,T,]\\
X(0) &=& X_0.
\end{array}\right.
\end{equation}
\begin{defi}
A portfolio $\pi$ is admissible if it satisfies
\begin{enumerate}
\item $\pi$ is $\mathbb{F}$-progressively measurable;
\item  \eqref{eq_X2} admits a unique strong solution;
\item $\sum_{j=1}^D E\Big[\int_0^T\Big\{|P_{0,j}+r_jX(t)+\pi(t)(\mu_j-r_j)|+\sigma^2_j\pi^2(t)+\lambda_j^0\int_0^{\infty}\zeta^2F_j(\diff \zeta)\Big\}\diffns t\Big]<\infty$;
\item $X(t)\geq 0,\,\,\forall t\in[0,T]$, $P$-a.s.
\end{enumerate}
We denote by $\mathcal{A}$ the space of all admissible portfolios.
\end{defi}

Define $\mathbb{G}:=\{\mathcal{G}_t, t\in[0,T]\}$, where $\mathcal{G}_t:=\mathcal{F}_t^B\vee \mathcal{F}_t^{Z_0}$, and for $n,j=1,\ldots,D$, let $\{C_{nj}(t),t\in[0,T]\}$ be a real-valued, $\mathbb{G}$-predictable, bounded, stochastic process on $(\Omega,\mathcal{F},P)$ such that for each $t\in[0,T]$
$C_{nj}\geq 0$ for $n\neq j$ and $\sum_{n=1}^D C_{nj}(t)=0,\, i.e,\,C_{nn}\leq 0$.

We consider a model uncertainty setup given by a probability measure $Q=Q^{\theta,\mathbf{C}}$ which is equivalent to $P$, with Radon-Nikodym derivative on $\mathcal{F}_t$ given by
\begin{align}
\frac{\diffns Q}{\diffns P}\Big|_{\mathcal{F}_t}=G^{\theta,C}(t),
\end{align}
where, for $0\leq t\leq T$, $G^{\theta,C}$ is a $\mathbb{F}$-martingale. Under $Q^{\theta,\mathbf{C}}$,  $\mathbf{C}:=\{\mathbf{C}(t),t\in[0,T]\}$ with $\mathbf{C}(t):=[C_{nj}(t)]_{n,j=1,\ldots,D}$ is a family of rate matrices of the Markov chain $\alpha(t)$; See e.g., \citep{DuEl1999}. For each $t\in [0,T]$, we set
 $$
  \mathbf{D}_0^{\mathbf{C}}(t):=\mathbf{D}^\mathbf{C}(t) -\mathbf{diag}(\mathbf{d}^C(t)),
 $$
  with
 $\mathbf{d}^C(t)=(d^C_{11},\ldots,d^C_{DD})^\prime\in \mathbb{R}^D$ and
\begin{align}
\mathbf{D}^C:=\Big[\frac{C_{nj}(t)}{\lambda_{nj}(t)}\Big]_{n,j=1,\cdots,D}=[d^{\mathbf{C}} _{nj}(t)].
\end{align}
We denote by $\mathcal{C}$ the space of all families intensity matrices $\mathbf{C}$ with bounded components.

 The Radon-Nikodym derivative or density process $G^{\theta,\mathbf{C}}$ is given by
\begin{equation}\left\{
\begin{array}{llll}\label{eq_Radon}
\diff G^{\theta,\mathbf{C}}(t) & = & G^{\theta,C}(t^-)\Big\{ \theta(t)\diffns B(t)+\displaystyle \int_0^{\infty}\theta(t)\widetilde{N}^0_{\alpha}(\diffns t,\diffns \zeta)\\
&&+(\mathbf{D}^{\mathbf{C}}_0(u)\alpha(u)-\mathbf{1})^\prime\cdot\diff \widetilde{\Phi}(t)\Big\},\,\,\,t\in[0,T],\\
 G^{\theta,\mathbf{C}}(0) &=& 1,
\end{array}\right.
\end{equation}
where $^\prime$ represents the transpose. Here $(\theta, \mathbf{C})$ may be regarded as scenario control. A control $\theta$ is admissible if $\theta$ is $\mathbb{F}$-progressively measurable, with $\theta(t)=\theta(t,\omega)\leq 1$ for a.a $(t,\omega)\in[0,T]\times\Omega$, and $\int_0^T\theta^2(t)\diffns t<\infty.$ We denote by $\Theta$ the space of such admissible processes.

Next, we formulate the optimal investment problem under model uncertainty. Let $U:(0,\infty)\longrightarrow \mathbb{R}$, be an utility function which is strictly increasing, strictly concave and twice continuously differentiable. The objectives of the insurance firm and the market are the following:
\begin{problem} \label{proinsur1}
Find a portfolio process $\pi^*\in \mathcal{A}$ and the process $(\theta^*, \mathbf{C}^\ast)\in \Theta\times \mathcal{C}$ such that
\begin{align}\label{eqpbAPP}
\underset{\pi \in  \mathcal{A}}{\sup}\,\,\underset{(\theta,\mathbf{C}) \in  \Theta\times\mathcal{C}}{\inf}\,E_{Q^{\theta,\mathbf{C}}}\Big[U^{\pi}(X_T)\Big]=&E_{Q^{\theta^\ast,\mathbf{C}^\ast}}
\Big[U^{\pi^\ast}(X_T)\Big]\notag\\
=&\underset{(\theta,\mathbf{C}) \in  \Theta\times\mathcal{C}}{\inf}\,\,\underset{\pi \in  \mathcal{A}}{\sup}\,E_{Q^{\theta,\mathbf{C}}}\Big[U^{\pi}(X_T)\Big].
\end{align}
\end{problem}
This problem can be seen as a zero-sum stochastic differential game problem of an insurance form. On one hand, we have
\begin{align}
E_{Q^{\theta,\mathbf{C}}}\Big[U^{\pi}(X_T)\Big]=E\Big[G^{\theta,\mathbf{C}}(T)U(X^{\pi}(T))\Big].
\end{align}
Now, define $Y(t)=Y^{\theta,\mathbf{C},\pi}(t)$ by
\begin{align}\label{eqYop11}
Y(t)=E\Big[\frac{G^{\theta,\mathbf{C}}(T)}{G^{\theta,\mathbf{C}}(t)}U(X^{\pi}(T))\Big|\mathcal{F}_t\Big].
\end{align}
Then, it can easily be shown that $Y(t)$ is the solution to the following linear BSDE
\begin{equation}\left\{
\begin{array}{llll}
\diffns Y (t) & = &-\Big[\theta(t)Z_0(t)+\displaystyle \int_{\mathbb{R}_0}\theta(t)K(t,\zeta)\nu^0_{\alpha}(\diff \zeta)+\sum_{j=1}^D(\mathbf{D}_0^{\mathbf{C}}(t)\alpha(t)-\mathbf{1})_j\lambda_jV_j(t)\Big]\diffns t \\
&  & Z_0(t)\diffns B(t) +\displaystyle \int_{\mathbb{R}_0}K(t,\zeta)\widetilde{N}^0_\alpha(\diffns \zeta,\diffns t)+V(t)\cdot  \diffns \widetilde{\Phi}(t),\,\,t\in [0,T], \label{eqstateprocessY} \\
Y(T) &=& U(X^{\pi}(T)).
\end{array}\right.
\end{equation}
Noting that
\begin{align}
Y(0)=Y^{\theta,\mathbf{C},\pi}(0)=E_{Q^{\theta,\mathbf{C}}}\Big[U^{\pi}(X_T)\Big],
\end{align}
Problem \ref{proinsur1} becomes
\begin{problem} \label{proinsur2}
Find a portfolio process $\pi^\ast\in \mathcal{A}$ and the process $(\theta^\ast, \mathbf{C}^\ast)\in \Theta\times \mathcal{C}$ such that
\begin{align}\label{eqpbAPP2}
\underset{\pi \in  \mathcal{A}}{\sup}\,\,\underset{(\theta,\mathbf{C}) \in  \Theta\times\mathcal{C}}{\inf}\,Y^{\theta,\mathbf{C},\pi}(0)=Y^{\theta^\ast,\mathbf{C}^\ast,\pi^\ast}(0)
=\underset{(\theta,\mathbf{C}) \in  \Theta\times\mathcal{C}}{\inf}\,\,\underset{\pi \in  \mathcal{A}}{\sup}\,Y^{\theta,\mathbf{C},\pi}(0),
\end{align}
where $Y^{\theta,\mathbf{C},\pi}$ is described by the forward-Backward system \eqref{eq_X2} and \eqref{eqstateprocessY}.
 \end{problem}

 \begin{thm}\label{thmappliopinv1}
Let $X^{\pi}(t)$ be dynamic of the surplus process satisfying \eqref{eq_X2}. Consider the optimization problem to find $\pi^\ast\in \mathcal{A}$ and $(\theta^\ast,\mathbf{C}^\ast)\in \Theta \times \mathcal{C}$ such that \eqref{eqpbAPP} \textup{(}or equivalently \eqref{eqpbAPP2}\textup{)} holds, with
\begin{align}\label{eqYop12}
Y^{\theta,\mathbf{C},\pi}(t)=E\Big[\frac{G^{\theta,\mathbf{C}}(T)}{G^{\theta,\mathbf{C}}(t)}U(X^{\pi}(T))\Big|\mathcal{F}_t\Big].
\end{align}
Moreover, suppose that $U(x)=-e^{-\beta x},\, \beta\geq 0.$ Then the optimal investment $\pi^\ast(t)$ and the optimal scenario measure of the market $(\theta^\ast,\mathbf{C}^\ast)$ are given respectively by
\begin{align}
\theta^\ast(t)=&-\sum_{n=1}^D\Big(\frac{\mu_n(t)-r_n(t)-\sigma^2_n(t)\pi^\ast(t,e_n)\beta}{\sigma_n}\Big)\langle \alpha(t),e_n\rangle,\\
 \pi^\ast(t)=&\sum_{n=1}^D\bigg(\frac{\displaystyle \int_{\mathbb{R}^+}(e^{\beta \zeta}-1)\lambda_n^0 F_n(\diffns \zeta)}{\beta\sigma_n}\bigg)\langle \alpha(t),e_n \rangle,
\end{align}
and the optimal $\mathbf{C}^\ast$ satisfies the following constraint linear optimization problem:

  \begin{align}
\underset{C_{1j},\ldots,C_{Dj}}\min \,\, \sum_{j=1}^D(\mathbf{D}^{\mathbf{C}}_0(t)e_n-\mathbf{1})_j\lambda_{nj} V_j(t)\,\,\, j=1,\ldots,D,
\end{align}
 subject to the linear constraints
 $$\sum_{n=1}^DC_{nj}(t)=0,$$
 where $V_j$ is given by \eqref{eq_V}.

Moreover, if we assume that the space of family matrix rates $(C_{nj})_{n,j=1,2}$ is bounded and write $C_{nj}(t)\in \Big[C^l(n,j), C^u(n,j)\Big]$ with $C^l(n,j)< C^u(n,j),\,\,n,j=1,2$. Then, in this case, the optimal $\mathbf{C}^\ast$ is given by:
\begin{eqnarray}
\mathbf{C}^\ast_{21}(t)&=&C^l(2,1)\mathbb{I}_{V_1(t)-V_2(t)>0}+C^u(2,1)\mathbb{I}_{V_1(t)-V_2(t)<0},\nonumber\\
\mathbf{C}^\ast_{11}(t)&=&-\mathbf{C}^\ast_{21}(t).
\end{eqnarray}
 The same we have that the solution for problem \ref{PG2} is given by:
 \begin{eqnarray}
\mathbf{C}^\ast_{21}(t)&=&C^l(2,1)\mathbb{I}_{V_2(t)-V_1(t)->0}+C^u(2,1)\mathbb{I}_{V_2(t)-V_1(t)<0},\nonumber\\
\mathbf{C}^\ast_{22}(t)&=&-\mathbf{C}^\ast_{12}(t).
\end{eqnarray}

\end{thm}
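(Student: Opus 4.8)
The plan is to recast Problem \ref{proinsur2} as a zero-sum Markov regime-switching forward-backward game and then read off the saddle point from the equivalent maximum principle of Theorem \ref{theomainneccon2}. First I would take the forward state to be the wealth $X^{\pi}$ of \eqref{eq_X2}, the backward state to be $Y^{\theta,\mathbf{C},\pi}$ of \eqref{eqstateprocessY}, and put $f=0$, $\varphi=0$, $\psi=\mathrm{id}$, so that the payoff reduces to $Y(0)=E_{Q^{\theta,\mathbf{C}}}[U(X_T)]$. With $u_1=\pi$ for the firm (maximizer) and $u_2=(\theta,\mathbf{C})$ for the market (minimizer), the Hamiltonian \eqref{Hamiltoniansddg1} specializes to
\begin{align*}
H &= A\Big(\theta Z_0+\int_{\mathbb{R}_0}\theta K(\zeta)\nu^0_\alpha(\diffns\zeta)+\sum_{j=1}^D(\mathbf{D}_0^{\mathbf{C}}\alpha-\mathbf{1})_j\lambda_j V_j\Big)\\
&\quad +p\Big(P_0+rX+\pi(\mu-r)-\int_{\mathbb{R}_0}\zeta\,\nu^0_\alpha(\diffns\zeta)\Big)+q\sigma\pi-\int_{\mathbb{R}_0}r(\zeta)\zeta\,\nu^0_\alpha(\diffns\zeta),
\end{align*}
where $(A;\,p,q,r,w)$ are the adjoint processes of \eqref{lambda1} and \eqref{ABSDE1}.

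The next step is to identify the adjoints. Since the driver of \eqref{eqstateprocessY} is independent of $y$ and is linear in $(z,k,v)$ with coefficients $\theta$, $\theta$ and $(\mathbf{D}_0^{\mathbf{C}}\alpha-\mathbf{1})_j\lambda_j$, the equation \eqref{lambda1} for $A$ coincides termwise with the density dynamics \eqref{eq_Radon}; with $A(0)=\psi'(Y(0))=1$ this forces $A(t)=G^{\theta,\mathbf{C}}(t)$. The co-state $p$ then solves the linear BSDE $\diffns p=-rp\,\diffns t+q\,\diffns B+\int_{\mathbb{R}_0}r(\zeta)\widetilde{N}^0_\alpha(\diffns\zeta,\diffns t)+w\cdot\diffns\widetilde{\Phi}$ with $p(T)=A(T)U'(X(T))=\beta G(T)e^{-\beta X(T)}$.

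I would then invoke the stationarity conditions \eqref{thnecondsum2}; since $\mathcal{E}^{(1)}_t=\mathcal{E}^{(2)}_t=\mathcal{F}_t$ the conditioning is redundant and they read $\partial H/\partial\pi=p(\mu-r)+q\sigma=0$ and $\partial H/\partial\theta=A\big(Z_0+\int_{\mathbb{R}_0}K\,\nu^0_\alpha(\diffns\zeta)\big)=0$. To convert these into the stated closed forms I would exploit the exponential structure of $U$: applying the It\^o--L\'evy formula to $-e^{-\beta X}$ along \eqref{eq_X2} and matching the result with \eqref{eqstateprocessY} at the candidate optimum identifies the martingale integrands as $Z_0=\beta\sigma\pi\,e^{-\beta X}$ and $K(\zeta)=(1-e^{\beta\zeta})e^{-\beta X}$; inserting these into $Z_0+\int K\nu^0_\alpha=0$ and using $\nu^0_\alpha=\sum_n\langle\alpha,e_n\rangle\lambda^0_n F_n$ yields $\pi^\ast(t,e_n)$. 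Similarly, computing by the product rule the $\diffns B$-integrand of $p=\beta Ge^{-\beta X}$, namely $q=\beta Ge^{-\beta X}(\theta-\beta\sigma\pi)$, and substituting into $p(\mu-r)+q\sigma=0$ produces $\theta^\ast(t,e_n)$.

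Finally, because $\mathbf{C}$ is constrained ($\sum_n C_{nj}=0$, bounded entries) there is no interior stationarity condition for it; instead its optimum is obtained by minimizing the $\mathbf{C}$-affine term $A\sum_j(\mathbf{D}_0^{\mathbf{C}}e_n-\mathbf{1})_j\lambda_{nj}V_j$ over the feasible polytope. As a linear program its minimizer lies at a vertex, which in the two-state case is the bang-bang rule governed by the sign of $V_1-V_2$ (and of $V_2-V_1$ for the inf-sup ordering), giving the displayed $\mathbf{C}^\ast$. The main obstacle I anticipate is this third step: rigorously justifying the exponential ansatz for the coupled forward-backward solution, that is, proving that $(Z_0,K)$ and the adjoint pair $(p,q)$ genuinely have the asserted closed forms and that the resulting $(\pi^\ast,\theta^\ast,\mathbf{C}^\ast)$ is admissible and consistent with \eqref{eqstateprocessY}, along with verifying the integrability hypotheses \eqref{thneccond1}--\eqref{thneccond12} required by Theorem \ref{theomainneccon1}.
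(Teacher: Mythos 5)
Your overall strategy is the same as the paper's: reduce Problem \ref{proinsur2} to a zero-sum forward--backward game with $f=\varphi=0$, $\psi=\mathrm{id}$, $h=U$; observe that the adjoint $A$ of \eqref{lambda1} has the same dynamics as the density \eqref{eq_Radon}, so $A=G^{\theta,\mathbf{C}}$; write the two first-order conditions $(\mu-r)p+\sigma q=0$ and $Z+\int_{\mathbb{R}^+} K(t,\zeta)\,\nu^0_\alpha(\diffns\zeta)=0$; and treat $\mathbf{C}$ separately as a constrained linear program solved at a vertex. The gap is in your third step, where you posit $Y=-e^{-\beta X}$ and $p=\beta G e^{-\beta X}$ and ``read off'' the integrands. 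Neither ansatz actually solves its BSDE: applying the It\^o--L\'evy formula to $-e^{-\beta X}$ along \eqref{eq_X2} produces a drift that cannot be matched with the driver of \eqref{eqstateprocessY} (which contains $\theta Z$, $\int\theta K\,\nu^0_\alpha$ and the $\mathbf{C}$-terms), and likewise for \eqref{eqq_P1}. The paper repairs exactly this by inserting a regime-modulated deterministic factor, $Y=f_1(t,\alpha(t))e^{-\beta X}$ and $p=\beta f(t,\alpha(t))A(t) e^{-\beta X}$, where $f_1$ and $f$ solve the backward ODE systems \eqref{eqq_f1} and \eqref{eqfopt11}; it is precisely the freedom in $f_1,f$ that makes the drift terms match.

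For $\pi^\ast$ and $\theta^\ast$ your shortcut is harmless, since the factors $f_1$ and $f$ cancel in the two first-order conditions --- that is why you still land on the correct closed forms. But it is fatal for the $\mathbf{C}^\ast$ part: since $X$ in \eqref{eq_X2} has no jumps driven by the chain, the process $-e^{-\beta X}$ has no $\diffns\widetilde{\Phi}$ component at all, so your matching forces $V_j\equiv 0$, and the linear program whose solution is the bang-bang rule in the sign of $V_1-V_2$ becomes vacuous. The nonzero $V_j=\big(f_1(t,e_j)-f_1(t,\alpha(t))\big)e^{-\beta X(t)}$ of \eqref{eq_V}, which is what the optimal $\mathbf{C}^\ast$ responds to, exists only because of the chain-dependent factor you dropped; you cite \eqref{eq_V}, but your own construction cannot produce it. So the obstacle you flag at the end is not merely a matter of verifying admissibility and the integrability hypotheses: the exponential ansatz must be enlarged to the product form $f_1(t,\alpha(t))e^{-\beta X}$ (and $\beta f(t,\alpha(t))A e^{-\beta X}$ for the co-state) before the scheme closes.
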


 \begin{proof}

One can see that this is a particular case of a zero-sum stochastic differential game of the forward-backward system  of the form \eqref{eqstateprocess1} and \eqref{eqassBSDE} with $\psi=Id$, $\varphi=f=0$ and $ h(x)=U(x).$ The Hamiltonian in Section \ref{maxiprinc1} is reduced to
\begin{align}\label{eqHamil}
&H(t, x,e_n y,z,k,v,\pi,\theta,a,p,q,r^0,w)\notag\\
=& a\Big[\theta z+\displaystyle \int_{\mathbb{R}^+}\theta k(t,\zeta)\nu^0_{e_n}(\diff \zeta)+\sum_{j=1}^D(\mathbf{D}_0^\mathbf{C}(t)e_n-\mathbf{1})_j\lambda_{nj}v_j(t)\Big] \nonumber\\
&+\Big[P_0(t)+rx+\pi(\mu-r)-\displaystyle \int_{\mathbb{R}^+}\zeta\nu^0_{e_n}(\diffns \zeta)\Big]p\nonumber\\
&+\sigma \pi q-\displaystyle \int_{\mathbb{R}^+}\zeta r^0(t,\zeta)\nu_{e_n}^0(\diffns \zeta).
\end{align}
The adjoint processes $A(t)$ ,$(p(t),q(t),r^0(t,\zeta),w(t))$ associated with the Hamiltonian are given by the following forward-backward SDE
\begin{equation}\left\{
\begin{array}{llll}
\diffns A(t)&=&A(t)\Big[\theta(t)\diffns B(t)+\displaystyle \int_{\mathbb{R}^+}\theta(t)\widetilde{N}^0_\alpha(\diffns \zeta,\diffns t)+(\mathbf{D}_0^{\mathbf{C}}(t)\alpha(t)-\mathbf{1})^\prime\cdot\diffns \widetilde{\Phi}(t)\Big],\,t\in[0,T], \\
A(0)&=&1, \label{eqA111}
\end{array}\right.
\end{equation}
and
\begin{equation}\left\{
\begin{array}{llll}
\diffns p(t)&=&-r(t)p(t)\diffns t+q(t)\diffns B(t)+\displaystyle \int_{\mathbb{R}^+}r^0(t,\zeta )\widetilde{N}^0_\alpha(\diffns \zeta,\diffns t)+W(t)\cdot\diffns \widetilde{\Phi}(t),\,t\in[0,T], \\
p(T)&=&A(T)U^\prime(X(T)).\label{eqq_P1}
\end{array}\right.
\end{equation}

It is clear that the functions $h, \phi$ and $H$ satisfy the assumptions  of Theorem \ref{theomainneccon2}.

Maximizing the Hamiltonian $H$ with respect to $\pi$ gives the first order condition for an optimal $\pi^\ast$.
\begin{align}\label{eqHami}
\frac{\partial H}{\partial \pi}=0\,\, \text{ i.e, }\,\, (\mu-r)p+\sigma q=0.
\end{align}
The BSDE \eqref{eqq_P1} is linear in $p$, hence we shall try a process $p(t)$ of the form
\begin{align}\label{eqp1111}
p(t)=\beta f(t,\alpha(t))A(t)e^{-\beta X(t)},
\end{align}
 where $f(\cdot,e_n)$ satisfies a differential equation to be determined. Applying the It\^o-L\'evy's formula for jump-diffusion, Markov regime-switching process (see e.g., \citep[Theorem 4.1]{ZES2012}), we get
 \begin{align}
 \diffns \Big( A(t)e^{-\beta X(t)}\Big)=&e^{-\beta X(t)}A(t)\Big[\theta(t)\diffns B(t)+\displaystyle \int_{\mathbb{R}^+}\theta(t)\widetilde{N}^0_\alpha(\diffns \zeta,\diffns t)+(\mathbf{D}_0^{\mathbf{C}}(t)\alpha(t)-\mathbf{1})^\prime\cdot\diffns \widetilde{\Phi}(t)\Big]\nonumber\\
  &+A(t) e^{-\beta X(t)}\Big[\Big( -\beta\Big\{ P_0(t)+r(t)X(t)+\pi(t)(\mu(t)-r(t))\Big\}+\frac{1}{2}\beta^2\sigma^2(t)\pi^2(t)\notag\\
  &
  +\displaystyle \int_{\mathbb{R}^+}(e^{\beta \zeta}-1) \nu^0_{\alpha}(\diffns \zeta)\Big)\diffns t-\beta \sigma(t)\pi(t)\diffns B(t)+\displaystyle \int_{\mathbb{R}^+}(e^{\beta \zeta}-1)\widetilde{N}^0_\alpha(\diffns \zeta,\diffns t)\Big]\nonumber\\
  &-\beta A(t)e^{-\beta X(t)}\theta(t)\sigma(t)\pi(t)\diffns t +\displaystyle \int_{\mathbb{R}^+}\theta(t)A(t) e^{-\beta X(t)}(e^{\beta \zeta}-1)N^0_\alpha(\diffns \zeta,\diffns t)
  \nonumber\\
  =& A(t)e^{-\beta X(t)}
  \Big[\Big( -\beta\Big\{ P_0(t)+r(t)X(t)+\pi(t)(\mu(t)-r(t))\Big\}-\beta \theta(t)\sigma(t)\pi(t)\notag\\
  &
  +\frac{1}{2}\beta^2\sigma^2(t)\pi^2(t)+\displaystyle \int_{\mathbb{R}^+}(1+\theta(t))(e^{\beta \zeta}-1) \nu^0_{\alpha}(\diffns \zeta)\Big)\diffns t\notag\\
&+ ( \theta(t)-\beta \sigma(t)\pi(t))\diffns B(t)+\displaystyle \int_{\mathbb{R}^+}\Big\{(1+\theta(t))(e^{\beta \zeta}-1)+\theta(t)\Big\}\widetilde{N}^0_\alpha(\diffns \zeta,\diffns t)\notag\\
 &+(\mathbf{D}_0^{\mathbf{C}}(t)\alpha(t)-\mathbf{1})^\prime\cdot\diffns \widetilde{\Phi}(t)\Big].
  \end{align}
 Putting $A(t)e^{-\beta X(t)}=P_1(t)$, then $p(t)=\beta f(t,\alpha(t))P_1(t)$ and using once more the It\^o-L\'evy's formula for jump-diffusion Markov regime-switching process, we get
 \begin{align}\label{expr_Ap}
 \diffns p(t)=& \beta \diffns \Big(f(t,\alpha(t))P_1(t)\Big)\nonumber\\
   =& \beta\Big [ f^\prime(t,\alpha(t))P_1(t)\diffns  t +f(t,\alpha(t))P_1(t)\Big[\Big( -\beta\Big\{ P_0(t)+r(t)X(t)+\pi(t)(\mu(t)-r(t))\Big\}\notag\\
   &
   -\beta \theta(t)\sigma(t)\pi(t)+\frac{1}{2}\beta^2\sigma^2(t)\pi^2(t)+\displaystyle \int_{\mathbb{R}^+}(1+\theta(t))(e^{\beta \zeta}-1) \nu^0_{\alpha}(\diffns \zeta)\Big)\diffns t\notag\\
   &+ ( \theta(t)-\beta \sigma(t)\pi(t))\diffns B(t)\Big]+ \sum_{j=1}^D\Big(f(t,e_j)-f(t,\alpha(t))\Big)
  P_1(t)(\mathbf{D}_{0,\alpha}^{\mathbf{C}}(t))^j\lambda_j(t)\diffns t\nonumber\\
    &+\displaystyle \int_{\mathbb{R}^+}f(t,\alpha(t))P_1(t)\Big\{(1+\theta(t))(e^{\beta \zeta}-1)+\theta(t)\Big\}\widetilde{N}^0_\alpha(\diffns \zeta,\diffns t)\nonumber\\
&+ \sum_{j=1}^DP_1(t)\Big(    f(t,e_j)(\mathbf{D}_{0,\alpha}^{\mathbf{C}}(t))^j -f(t,\alpha(t)) \Big) \diffns \widetilde{\Phi}_j(t)\Big],
 \end{align}
  where $(\mathbf{D}_{0,\alpha}^{\mathbf{C}}(t))^j=(\mathbf{D}_0^{\mathbf{C}}(t)\alpha(t))^j$. Comparing \eqref{expr_Ap}  with \eqref{eqq_P1}, by equating the terms in $\diffns t$, $\diffns B(t)$, $\widetilde{N}_\alpha(\diffns \zeta,\diffns t)$, and $\diffns \widetilde{\Phi}_j(t)$  $j=1,\ldots,D$, respectively, we get
 \begin{align}\label{eqq1111}
 	q(t)=(\theta^\ast-\beta\sigma(t)\pi^\ast(t))p(t).
 \end{align}
 Substituting this into \eqref{eqHami} gives,
\begin{align}\label{eqoptheta}
(\mu-r)p=&-\sigma\Big(\theta^\ast-\sigma(t)\pi^\ast(t)\beta\Big)p,\nonumber\\
\text{i.e., }\,\, \theta^\ast(t)=&-\sum_{j=1}^D\Big(\frac{\mu_j(t)-r_j(t)-\sigma^2_j(t)\pi^\ast(t,e_j)\beta}{\sigma_j}\Big)\langle \alpha(t),s_j\rangle.
\end{align}
On the other hand, we also have
\begin{align}\label{eqr1111}
r^0(t,\zeta)=p(t)\Big\{(1+\theta(t))(e^{\beta \zeta}-1)+\theta(t)\Big\}
\end{align}
and
\begin{align}\label{eqw1111}
w_j(t)=\beta\Big\{P_1(t)\Big(    f(t,e_j)(\mathbf{D}_{0,\alpha}^{\mathbf{C}}(t))^j -f(t,\alpha(t)) \Big)\Big\},
\end{align}
with the function $f(\cdot,e_n)$ satisfying the following backward differential equation:

\begin{align}
& f^\prime(t,e_n) +f(t,e_n)\Big[-\beta\Big\{ P_0(t,e_n)+r(t,e_n)X(t)+\pi(t)(\mu(t,e_n)-r(t,e_n))\Big\} + r(t,e_n)\notag\\
 &
 -\beta \theta(t)\sigma(t,e_n)\pi(t)+\frac{1}{2}\beta^2\sigma^2(t,e_n)\pi^2(t)+\displaystyle \int_{\mathbb{R}^+}(1+\theta(t))(e^{\beta \zeta}-1) \lambda^0_{n}F_{e_n}(\diffns \zeta)\Big] \nonumber\\
  &+\sum_{j=1}^D\Big(f(t,e_j)-f(t,e_n)\Big)(\mathbf{D}_{0,e_n}^{\mathbf{C}}(t))_{nj}\lambda_{n j}=0, \label{eqfopt11}
\end{align}
with the terminal condition $f(T,e_n)=1$, for $n=1,\ldots, D$. The solution of such backward equation can be found in \citep{ElSi2011}. Minimizing the Hamiltonian $H$ with respect to $\theta$ gives the first order condition for an optimal $\theta ^\ast$.

\begin{align}\label{eqHami2}
\frac{\partial H}{\partial \theta}=0\,\, \text{ i.e., }\,\, z+\displaystyle \int_{\mathbb{R}^+}k(t,\zeta)\nu^0_{\alpha}(\diffns \zeta)=0.
\end{align}

The BSDE \eqref{eqstateprocessY} is linear in $Y$, hence we shall try the process $Y(t)$ of the form
\begin{align}\label{eqY1111}
Y(t)=f_1(t,\alpha(t))Y_1(t)\,\, \text{ with }\,\, Y_1(t)=e^{-\beta X(t)},
\end{align}
where $f_1(\cdot,e_n),\,\,n=1,\ldots, D$ is a deterministic function satisfying a backward differential equation to be determined. Applying the It\^o-L\'evy's formula for jump-diffusion Markov regime-switching, we get

\begin{align}\label{eq_Y3}
\diffns Y(t)=& f_1^\prime(t,\alpha(t) )e^{-\beta X(t)}\diffns t- f_1(t, \alpha(t))Y_1(t)\beta\Big\{p_0(t)+r(t)X(t)\nonumber\\
 &+\pi(t)(\mu(t)-r(t))-\frac{1}{2}\beta\sigma^2(t)\pi^2(t)+\frac{1}{\beta}\displaystyle \int_{\mathbb{R}^+}(e^{\beta \zeta}-1)\nu^0_{\alpha}(\diffns \zeta)\Big\}\diffns t\nonumber\\
  &+\sum_{j=1}^D\Big(f_1(t,e_j)-f_1(t,\alpha(t))\Big)Y_1(t)\lambda_j(t)\diffns t\nonumber\\
 &- f_1(t,\alpha(t))Y_1(t)\beta\sigma(t)\pi(t)\diffns B(t)+ \displaystyle \int_{\mathbb{R}^+}f_1(t,\alpha(t))Y_1(t)(e^{\beta \zeta}-1)\widetilde{N}^0_\alpha(\diffns \zeta,\diffns t)\nonumber\\
 &+ \sum_{j=1}^D\Big(f_1(t,e_j)-f_1(t,\alpha(t))\Big)Y_1(t) \diffns \widetilde{\Phi}_j(t).
\end{align}
Comparing \eqref{eq_Y3} and \eqref{eqstateprocessY}, we get
\begin{align}
Z(t)=&-\beta Y(t)\sigma(t)\pi(t),\label{eq_z}\\
K(t,\zeta)=& Y(t)(e^{\beta \zeta}-1),\label{eq_K}\\
V_j(t)=&\Big\{f_1(t,e_j)-f_1(t,\alpha(t)\Big\}Y_1(t)\label{eq_V}.
\end{align}
Substituting $Z(t)$ and $K(t,\zeta)$ into \eqref{eqHami2}, we get
\begin{align}\label{eqoppi1}
\beta\sigma(t)\pi^\ast(t)=&\displaystyle \int_{\mathbb{R}^+}(e^{\beta \zeta}-1)\nu^0_{\alpha}(\diffns \zeta),\nonumber\\
\text{ i.e., }\,\, \pi^\ast(t)=&\sum_{n=1}^D\bigg(\frac{\displaystyle \int_{\mathbb{R}^+}(e^{\beta \zeta}-1)\lambda_n^0 F_n(\diffns \zeta)}{\beta\sigma_n}\bigg)\langle \alpha(t),e_n \rangle.
\end{align}
Substituting \eqref{eq_z}-\eqref{eq_V} into \eqref{eq_Y3}, we deduce that the function $f_1(\cdot, e_n)$ satisfies the following backward differential  equation
\begin{align}\label{eqq_f1}
& f_1^\prime(t,e_n) +f_1(t,e_n)\Big[-\beta\Big\{ P_0(t,e_n)+r(t,e_n)X(t)+\pi(t)(\mu(t,e_n)-r(t,e_n))\Big\} + r(t,e_n)\notag\\
&
-\beta \theta(t)\sigma(t,e_n)\pi(t)+\frac{1}{2}\beta^2\sigma^2(t,e_n)\pi^2(t)+\displaystyle \int_{\mathbb{R}^+}(1+\theta(t))(e^{\beta \zeta}-1) \lambda^0_{n}F_{e_n}(\diffns \zeta)\Big] \nonumber\\
&+\sum_{j=1}^D\Big(f_1(t,e_j)-f_1(t,e_n)\Big)(\mathbf{D}_{0,e_n}^{\mathbf{C}}(t))_{nj}\lambda_{n j}=0
\end{align}
with the terminal condition $f_1(T,e_n)=-1$ for $n=1,\ldots,D$. 

As for the optimal $(C_{nj})_{n,j=1,\ldots,D}$, the only part of the Hamiltonian that depends on $\mathbf{C}$ is the sum $\sum_{j=1}^D(\mathbf{D}^{\mathbf{C}}_0(t)e_n-\mathbf{1})_j\lambda_{nj} V_j(t)$, Hence minimizing  the Hamiltonian with respect to $\mathbf{C}$ is equivalent to minimizing the following system of differential operator
\begin{align}\label{eqc111}
\underset{C_{1j},\ldots,C_{Dj}}\min \,\, \sum_{j=1}^D(\mathbf{D}^{\mathbf{C}}_0(t)e_n-\mathbf{1})_j\lambda_{nj} V_j(t)\,\,\, j=1,\ldots,D,
\end{align}
 subject to the linear constraints
 $$\sum_{n=1}^DC_{nj}(t)=0.$$
Hence, one can obtain the solution in the two-states case(since $C$ is bounded ) with $V_j$ and $f_1$ given by \eqref{eq_V} and \eqref{eqq_f1} respectively. More specifically, when the Markov has only two states, we have to solve the following two linear programming problems:

\begin{align}\label{PG1}
\underset{C_{11}(t),C_{21}(t)}\min \,\, ( V_1(t)-V_2(t))C_{21}(t)+\lambda_{21} (V_2(t)-V_1(t))
\end{align}
subject to the linear constraint $$C_{11}+C_{21}=0.$$

and
\begin{align}\label{PG2}
\underset{C_{12}(t),C_{22}(t)}\min \,\, ( V_2(t)-V_1(t))C_{12}(t)+\lambda_{12} (V_1(t)-V_2(t))
\end{align}
subject to the linear constraint $$C_{12}+C_{22}=0.$$

By imposing that the space of family matrix rates $(C_{nj})_{n,j=1,2}$ is bounded we can write that $C_{nj}(t)\in \Big[C^l(n,j), C^u(n,j)\Big]$ with $C^l(n,j)< C^u(n,j),\,\,i,j=1,2$. The solution to the preceding two linear control problems are then given by:
\begin{eqnarray}
C^\ast_{21}(t)&=&C^l(2,1)\mathbb{I}_{V_1(t)-V_2(t)>0}+C^u(2,1)\mathbb{I}_{V_1(t)-V_2(t)<0},\nonumber\\
C^\ast_{11}(t)&=&-C^\ast_{21}(t).
\end{eqnarray}
 The same we have that the solution for problem \ref{PG2} is given by:
 \begin{eqnarray}
C^\ast_{21}(t)&=&C^l(2,1)\mathbb{I}_{V_2(t)-V_1(t)->0}+C^u(2,1)\mathbb{I}_{V_2(t)-V_1(t)<0},\nonumber\\
C^\ast_{22}(t)&=&-C^\ast_{12}(t).
\end{eqnarray}
The proof is completed

\end{proof}

\begin{rem} \leavevmode
	\begin{itemize}
		\item Assume for example that the distribution of the claim size is of exponential type \textup{(}with parameter  $\tilde{\lambda}_j^0>2\beta,\,j=1,\ldots,n$\textup{)}. Moreover, assume that  $\pi, \theta$ and $C$ are given by \eqref{eqoppi1}, \eqref{eqoptheta} and \eqref{eqc111}, respectively. Then each of the following equations: Eq. \eqref{eq_X2}, \eqref{eqstateprocessY}, \eqref{eqA111} and \eqref{eqq_P1} admits a unique solution. The solution $(\widehat{Y}(t), \widehat{Z}(t), \widehat{K}(t,\zeta), \widehat{V}(t))$ \textup{(}respectively $(\widehat{p}_i(t),\widehat{q}_i(t),\widehat{r}_i(t,\zeta),\widehat{w}_i(t))$\textup{)} to \eqref{eqstateprocessY}  \textup{(}respectively \eqref{eqq_P1}\textup{)} is given by \eqref{eqY1111}, \eqref{eq_z}, \eqref{eq_K} and \eqref{eq_V} \textup{(}respectively \eqref{eqp1111}, \eqref{eqq1111}, \eqref{eqr1111} and \eqref{eqw1111}\textup{)}.
		
		\item We note that $f$ given by \eqref{eqfopt11} and $f_1$ given by \eqref{eqq_f1} coincide. Moreover, for $r=0$, the backward differential equation \eqref{eqfopt11}is the same as \textup{\citep[Eq. (4.13)]{ElSi2011}}
	\end{itemize}

\end{rem}

\section{Conclusion}\label{conlrem}

In this paper, we use a general maximum principle for Markov regime-switching forward-backward stochastic differential equation to study optimal strategies for stochastic differential games. The proposed model covers the model uncertainty in \cite{BMS07, ElSi2011, FMM10, JMN10, OS111}. The results obtained are applied to study two problems: First, we study robust utility maximization under relative entropy penalization. We show that the value function in this case is described by a quadratic backward stochastic differential equation. Second, we study a problem of optimal investment of an insurance company under model uncertainty. This can be formulated as a two-player, zero-sum, stochastic differential games between the market and the insurance company, where the market controls the mean relative growth rate of the risky asset and the company controls the investment. We find ``closed form" solutions of the optimal strategies of the insurance company and the market, when the utility is of exponential type and the Markov chain has two states.

Optimal control for delayed systems has received attention recently, due to the memory dependence of some processes. In this situation, the dynamic at the present time $t$ does not only depend on the situation at time $t$ but also on a finite part of their past history. Extension of the present work to the delayed case could be of interest. Such results were derived in \cite{Menou20141}, in the case of no regime-switching.

\appendix
\numberwithin{equation}{section}
\section{}

\begin{proof}[Proof of Theorem \protect\ref{mainressuf1}]
Let show that
$J_1(u_1,\widehat{u}_2,e_n)\leq J_1(\widehat{u}_1,\widehat{u}_2,e_n) \text{ for all } \widehat{u}_1\in \mathcal{A}_1.$
Fix $u_1  \in  \mathcal{A}_1$, then,  we have
\begin{align}\label{eqdiff1}
J_1(u_1,\widehat{u}_2,e_n)- J_1(\widehat{u}_1,\widehat{u}_2,e_n)=I_1+I_2+I_3,
\end{align}
where
\begin{align}
I_1=&E\Big[ \int_0^T \Big\{ f_1(t,X(t),\alpha(t),Y(t),u(t))-f_1(t, \widehat{X}(t),\alpha(t),\widehat{u}(t)) \Big\}\diff t \Big], \label{eqI11}\\
I_2=&E\Big[ \varphi_1(X(T),\alpha(T)) -\varphi_1(\widehat{X}(T), \alpha(T))\Big],\label{eqI21}\\
I_3=&E\Big[\psi_1(Y_1(0))\,-\,\psi_1(\widehat{Y_1}(0))\Big]. \label{eqI31}
\end{align}
By the definition of $H_1$, we get
\begin{align}
I_1=&E\Big[ \int_0^T \Big\{ H_1(t,u(t))-\widehat{H}_1(t,\widehat{u}(t))- \widehat{A}_1(t)(g_1(t)-\widehat{g}_1(t))-\widehat{p}_1(t)(b(t)-\widehat{b}(t))\Big.\Big.\notag\\
&\Big.\Big. -\widehat{q}_1(t)(\sigma(t)-\widehat{\sigma}(t))-\int_{\mathbb{R}_0}\widehat{r}_1(t,\zeta)(\gamma(t,\zeta)-\widehat{\gamma}(t,\zeta))\nu_\alpha(\diff\zeta) \notag\\
  &-\sum_{j=1}^D\widehat{w_1}^j(t)(\eta_j(t)-\widehat{\eta}_j(t) )\lambda_{j}(t)   \Big\} \diff t\Big]. \label{eqI12}
\end{align}
By concavity of $\varphi_1$ in $x$, It\^o formula,  and \eqref{ABSDE1} we have
\begin{align}\label{eqI22}
I_2\leq& E\Big[\dfrac{\partial \varphi_1}{\partial x} (\widehat{X}(T),\alpha(T))(X(T)-\widehat{X}(T)) \Big]\notag\\
=& E\Big[\widehat{p}_1(T) (X(T)-\widehat{X}(T))\Big]-E\Big[\widehat{A}_1(T)\dfrac{\partial h_1}{\partial x}(\widehat{X}(T),\alpha(T))(X(T)-\widehat{X}(T))\Big]\notag\\
=& E\Big[\int_0^T\widehat{p}_1(t) (\diffns X(t)-\diffns \widehat{X}(t))+\int_0^T(X(t^-)-\widehat{X}(t^-))
\diff\widehat{p}_1(t)+\int_0^T(\sigma(t)-\widehat{\sigma}(t))\widehat{q}_1(t)\diff t \Big.\notag\\
&+\int_0^T\int_{\mathbb{R}_0}(\gamma(t)-\widehat{\gamma}(t))\widehat{r}_1(t,\zeta)\nu_\alpha(\diffns\zeta)\diff t+  \int_0^T \sum_{j=1}^D\widehat{w_1}^j(t)(\eta_j(t)-\widehat{\eta}_j(t) )\lambda_{j}(t)    \diff t    \Big]\notag\\
&-E\Big[\widehat{A}_1(T)\dfrac{\partial h_1}{\partial x} (\widehat{X}(T),\alpha(T))(X(T)-\widehat{X}(T))\Big]\notag\\
=& E\Big[\int_0^T\widehat{p}_1(t) (b(t)-\widehat{b}(t))\diff t+\int_0^T(X(t^-)-\widehat{X}(t^-))
  \Big(-\frac{\partial \widehat{H}_1}{\partial x}(t)\Big)\diff t +\int_0^T(\sigma(t)-\widehat{\sigma}(t))\widehat{q}(t)\diff t \Big.\notag\\
&+\int_0^T\int_{\mathbb{R}_0}(\gamma(t)-\widehat{\gamma}(t))\widehat{r}_1(t,\zeta)\nu_\alpha(\diffns\zeta)\diff t+  \int_0^T \sum_{j=1}^D\widehat{w_1}^j(t)(\eta_j(t)-\widehat{\eta}_j(t) )\lambda_{j}(t)    \diff t    \Big]\notag\\
&-E\Big[\widehat{A}_1(T)\dfrac{\partial h_1}{\partial x} (\widehat{X}(T),\alpha(T))(X(T)-\widehat{X}(T))\Big].
\end{align}

By concavity of $\psi_1,h_1$, It\^o formula, \eqref{eqassBSDE} and \eqref{lambda1}, we get
\begin{align}
I_3\leq & E\Big[\psi_1^\prime(\widehat{Y}_1(0))(Y_1(0)-\widehat{Y}_1(0))\Big]\notag\\
=& E\Big[\widehat{A}_1(0)(Y_1(0)-\widehat{Y}_1(0))\Big]\notag\\
=& E\Big[\widehat{A}_1(T)(Y(T)-\widehat{Y}_1(T))\Big]-E\Big[\int_0^T\widehat{A}_1(t) (\diff Y_1(t)-\diff  \widehat{Y}_1(t))\Big.\notag\\
& +\int_0^T(Y_1(t^-)-\widehat{Y}_1(t^-))\diff \widehat{A}_1(t) + \int_0^T(Z_1(t)-\widehat{Z}_1(t))\dfrac{\partial \widehat{H}_1}{\partial z}(t) \diff t \notag\\
&\Big. +\int_0^T\int_{\mathbb{R}_0}(K_1(t,\zeta)-\widehat{K}_1(t,\zeta))\nabla_k \widehat{H}_1(t,\zeta)\nu_\alpha(\diff \zeta)\diff t+\int_0^T \sum_{j=1}^D\dfrac{\partial \widehat{H}_1}{\partial v^j}(t)  (V_1^j(t)-\widehat{V}_1^j(t) )\lambda_{j}(t)    \diff t    \Big]\notag\\
=& E\Big[\widehat{A}_1(T) \{h_1(X(T),\alpha(T))-h_1(\widehat{X}(T),\alpha(T))\} \Big]-E\Big[\int_0^T\dfrac{\partial \widehat{H}_1}{\partial y}(t) (Y_1(t)-\widehat{Y}_1(t)) \diff t \Big.\notag\\
&+\int_0^T\widehat{A}_1(t) (-g(t)+\widehat{g}(t))\diff t + \int_0^T(Z(t)-\widehat{Z}(t))\dfrac{\partial \widehat{H}}{\partial z}(t) \diff t\notag\\
& \Big.+\int_0^T\int_{\mathbb{R}_0}(K_1(t,\zeta)-\widehat{K}_1(t,\zeta))\nabla_k \widehat{H}_1(t,\zeta)\nu_\alpha(d\zeta)\diff t+\int_0^T \sum_{j=1}^D\dfrac{\partial \widehat{H}_1}{\partial v^j}(t)  (V_1^j(t)-\widehat{V}_1^j(t) )\lambda_{j}(t)    \diff t  \Big]. \notag
\end{align}
Hence we get
\begin{align}\label{eqI32}
I_3\leq& E\Big[\widehat{A}_1(T)\dfrac{\partial h_1}{\partial x}( \widehat{X}(T),\alpha(T))(X(T)-\widehat{X}(T)) \Big]-E\Big[\int_0^T\dfrac{\partial \widehat{H}_1}{\partial y}(t) (Y_1(t)-\widehat{Y}_1(t)) \diff t \Big.\notag\\
&+\int_0^T\widehat{A}_1(t) (-g(t)+\widehat{g}_1(t))\diff t + \int_0^T(Z_1(t)-\widehat{Z}_1(t))\dfrac{\partial \widehat{H}_1}{\partial z}(t) \diff t\\
& \Big.+\int_0^T\int_{\mathbb{R}_0}(K_1(t,\zeta)-\widehat{K}_1(t,\zeta))\nabla_k \widehat{H}_1(t,\zeta)\nu_\alpha(d\zeta)\diff t+\int_0^T \sum_{j=1}^D\dfrac{\partial \widehat{H}_1}{\partial v^j}(t)  (V_1^j(t)-\widehat{V}_1^j(t) )\lambda_{j}(t)    \diff t  \Big].\notag
\end{align}
Summing \eqref{eqI12}-\eqref{eqI32} up, we have
\begin{align}
I_1+I_2+I_3\leq & E\Big[ \int_0^T \Big\{H_1(t,u(t))-\widehat{H}_1(t,\widehat{u}(t))-\dfrac{\partial \widehat{H}_1}{\partial x}(t)(X(t)-\widehat{X}(t)) -\dfrac{\partial \widehat{H}_1}{\partial y}(t)(Y_1(t)-\widehat{Y}_1(t)) \Big. \Big. \notag\\
&\Big. \Big. +\int_{\mathbb{R}_0}(K_1(t,\zeta)-\widehat{K}_1(t,\zeta))\nabla_k \widehat{H}_1(t,\zeta)\nu_\alpha(d\zeta)\diff t\notag\\
&+ \sum_{j=1}^D\dfrac{\partial \widehat{H}_1}{\partial v^j}(t)  (V_1^j(t)-\widehat{V}_1^j(t) )\lambda_{j}(t) \Big\} \diffns t\Big].\label{eqdiff2}
\end{align}
One can show, using the same arguments in \cite{FOS05} that, the right hand side of \eqref{eqdiff2} is non-positive.  For sake of completeness we shall give the details here. Fix $t\in[0,T]$. Since $\widetilde{H}_1(x,y,z,k,v)$ is concave, it follows by the standard hyperplane argument (see e.g \cite[Chapter 5, Section 23]{Rock70}) that there exists a subgradient $d=(d_1,d_2,d_3,d_4(\cdot),d_5) \in \mathbb{R}^3\times \mathcal{R}\times \mathbb{R}$ for $\widetilde{H}_1(x,y,z,k,v)$ at $x=\widehat{X}(t),\,y=\widehat{Y}_1(t),\,z=\widehat{Z}_1(t),\,k=\widehat{K}_1(t,\cdot),\,v=\widehat{V}_1(t)$ such that if we define

\begin{align}\label{eqdiff3}
i_1(x,y,z,k,v):=&\widetilde{H}_1(x,y,z,k,v)-\widehat{H}_1(t)-d_1(x-\widehat{X}(t))-d_2(y-\widehat{Y}_1(t))-d_3(z-\widehat{Z}_1(t))\notag\\
&-\int_{\mathbb{R}_0} d_4(\zeta)(k(\zeta)-\widehat{K}_1(t,\zeta))\nu_\alpha(\diffns \zeta)-\sum_{j=1}^Dd_5^j (V_1^j(t)-\widehat{V}_1^j(t) )\lambda_{j}(t) .
\end{align}
Then $i(x,y,z,k,v)\leq 0$ for all $x,y,z,k,v$.

Furthermore, we clearly have $i(\widehat{X}(t),\widehat{Y}_1(t),\widehat{Z}_1(t),\widehat{K}_1(t,\cdot),\widehat{V}_1(t))$. It follows that,
\begin{align*}
d_1=\frac{\partial \widetilde{H}_1}{\partial x}(\widehat{X}(t),\widehat{Y}_1(t),\widehat{Z}_1(t),\widehat{K}_1(t,\cdot),\widehat{V}_1(t)),\\
d_2=\frac{\partial \widetilde{H}_1}{\partial y}(\widehat{X}(t),\widehat{Y}_1(t),\widehat{Z}_1(t),\widehat{K}_1(t,\cdot),\widehat{V}_1(t)),\\
d_3=\frac{\partial \widetilde{H}_1}{\partial z}(\widehat{X}(t),\widehat{Y}_1(t),\widehat{Z}_1(t),\widehat{K}_1(t,\cdot),\widehat{V}_1(t)),\\
d_4=\nabla_k \widetilde{H}_1(\widehat{X}(t),\widehat{Y}_1(t),\widehat{Z}_1(t),\widehat{K}_1(t,\cdot),\widehat{V}_1(t)),\\
d_5^j=\frac{\partial \widetilde{H}_1}{\partial v^j}(\widehat{X}(t),\widehat{Y}_1(t),\widehat{Z}_1(t),\widehat{K}_1(t,\cdot),\widehat{V}_1(t)).
\end{align*}
Combining this with \eqref{eqdiff2},  and using  the concavity of $\widetilde{H}_1$, we conclude that\\
$J_1(u_1,\widehat{u}_2,e_i)\leq J_1(\widehat{u}_1,\widehat{u}_2,e_i) \text{ for all } u_1 \in \mathcal{A}_1.$
In a similar way, one can show that $J_2(\widehat{u}_1,u_2,e_i)\leq J_2(\widehat{u}_1,\widehat{u}_2,e_i) \text{ for all } u_2 \in \mathcal{A}_2$. This completed the proof.
\end{proof}

\begin{proof}[Proof of Theorem \protect\ref{theomainneccon1}]
We have that
\begin{align}
&\dfrac{\diffns }{\diffns \ell}J_1^{(u_1+\ell \beta_1,u_2)}(t)\Big. \Big|_{\ell=0}\notag\\
=&E\Big[\int_0^T \Big\{\dfrac{\partial f_1}{\partial x}(t)X_1(t)  +\dfrac{\partial f_1}{\partial u_1}(t)\beta_1(t)\Big\} \diffns t+\dfrac{\partial \varphi_1}{\partial x}  (X^{(u_1,u_2)}(T),\alpha(T))X_1(T)+\psi_1^\prime(Y_1(0))y_1(0)\Big]\notag\\
=&J_1+J_2+J_3, \label{eqIprim11}
\end{align}
with
\begin{align*}
J_1=&E\Big[\int_0^T \Big\{\dfrac{\partial f_1}{\partial x}(t)X_1(t)  +\dfrac{\partial f_1}{\partial u_1}(t)\beta_1(t)\Big\} \diffns t\Big], \notag\\
J_2=&E\Big[\dfrac{\partial \varphi_1}{\partial x}  (X^{(u_1,u_2)}(T),\alpha(T))X_1(T)\Big], \notag\\
J_3=&E\Big[\psi_1^\prime(Y_10))y_1(0)\Big] .
\end{align*}

By It\^o's formula, \eqref{ABSDE1},  \eqref{derivstate1} and \eqref{thneccond1}, we have

\begin{align}
J_2=&E\Big[\dfrac{\partial \varphi_1}{\partial x}(X^{(u_1,u_2)}(T),\alpha (T) )X_1(T)\Big ]   \label{eqIprim2} \\
=&E\Big[ p_1(T)X(T)\Big ]- E\Big[\dfrac{\partial h_1}{\partial x}(X^{(u_1,u_2)}(T),\alpha(T) )A_1(T)X_1(T)\Big  ] \notag\\
=&E\Big[\int_0^T\Big\{p_1(t)\Big(\dfrac{\partial b}{\partial x}(t)X_1(t)+\dfrac{\partial b}{\partial u_1}(t)\beta_1(t)\Big)-X_1(t) \dfrac{\partial H_1}{\partial x}(t)\Big.\Big.\notag\\
& +q_1(t)\Big(\dfrac{\partial \sigma}{\partial x}(t)X_1(t)+\dfrac{\partial \sigma}{\partial u_1}(t)\beta_1(t)\Big)+\int_{\mathbb{R}_0} r_1(t,\zeta)\Big(\dfrac{\partial \gamma}{\partial x}(t,\zeta)X_1(t) +\dfrac{\partial \gamma}{\partial u_1}(t,\zeta)\beta_1(t)\Big) \nu_\alpha(\diffns \zeta)\bigg.\notag\\
& +\sum_{j=1}^D w_1^j(t)\Big(\dfrac{\partial \eta^j}{\partial x}(t)X_1(t)-\dfrac{\partial \eta^j}{\partial u_1}(t)\beta_1(t) \Big)\lambda_{j}(t) \Big\}\diffns t\Big]\notag\\
&- E\Big[\dfrac{\partial h_1}{\partial x}(X^{(u_1,u_2)}(T),\alpha(T) )A_1(T)X_1(T)\Big ]\Big ].\notag
\end{align}
Applying once more the It\^o's formula and using \eqref{derivassBSDE1} and \eqref{thneccond12}, we get

\begin{align}
J_3=&E\Big[\psi_1^\prime(Y(0))y_1(0)\Big] =E\Big[A(0)y_1(0)\Big]\notag\\
=&E\Big[A_1(T)y_1(T)\Big] - E\Big[\int_0^T\Big\{A_1(t^-)\diff y_1(t)+y_1(t^-)\diff A_1(t) +\dfrac{\partial H_1}{\partial z}(t)z_1(t)\diff t\Big.\Big.\notag\\
&\Big.\Big.+\int_{\mathbb{R}_0}\nabla_kH_1(t,\zeta)k_1(t,\zeta)\nu_\alpha(\diffns  \zeta)\diff t+\sum_{j=1}^D\dfrac{\partial H_1}{\partial v_1^j}(t)  v^j_1(t)\lambda_{j}(t) \diff t  \Big\}\Big]\notag
\end{align}
\begin{align}
=&E\Big[\dfrac{\partial h_1}{\partial x}(X^{(u_1,u_2)}(T),\alpha(T))X_1(T)\Big] + E\Big[\int_0^T\Big\{A_1(t)\Big(\dfrac{\partial g_1}{\partial x}(t)x_1(t)  +\Big.\dfrac{\partial g_1}{\partial y}(t)y_1(t) \Big.\Big.\Big. \notag\\
&  +\dfrac{\partial g_1}{\partial z}(t)z_1(t)+\int_{\mathbb{R}_0}\nabla_k g_1 (t,\zeta)k_1(t,\zeta)\nu_\alpha(\diffns \zeta)   + \sum_{j=1}^D\dfrac{\partial g_1}{\partial v^j}(t)  v^j_1(t)\lambda_{j}(t)    \notag\\
&   +\dfrac{\partial g_1}{\partial u_1}(t)\beta_1(t)   \Big)-\dfrac{\partial H_1}{\partial y}(t)y_1(t) -\dfrac{\partial H_1}{\partial z}(t)z_1(t)-\int_{\mathbb{R}_0}\nabla_kH_1(t,\zeta)k_1(t,\zeta)\nu_\alpha(\diffns \zeta) \notag\\
& -\sum_{j=1}^D\dfrac{\partial H_1}{\partial v^j}(t)  v^j_1(t)\lambda_{j}(t) \Big\}\diffns  t\Big] .\label{eqIprim3}
\end{align}

Substituting \eqref{eqIprim2} and \eqref{eqIprim3} into \eqref{eqIprim11}, we get

\begin{align}\label{eqIprim311}
&\dfrac{\diffns }{\diffns \ell}J_1^{(u_1+\ell \beta_1,u_2)}(t)\Big. \Big|_{\ell=0}\notag\\
=&E\Big[\int_0^TX_1(t)  \Big\{\dfrac{\partial f_1}{\partial x}(t) +A_1(t)\dfrac{\partial g_1}{\partial x}(t)+p_1(t)\dfrac{\partial b}{\partial x}(t)+q_1(t)\dfrac{\partial \sigma}{\partial x}(t)+\int_{\mathbb{R}_0} r_1(t,\zeta)\dfrac{\partial \gamma}{\partial x}(t,\zeta)\nu_\alpha(\diffns  \zeta)\notag\\
&+\sum_{j=1}^Dw_1^j(t)\dfrac{\partial \eta^j}{\partial x}(t)\lambda_{j}(t) -\dfrac{\partial H_1}{\partial x}(t)\Big\}\diffns t\Big]+E\Big[\int_0^T y_1(t)  \Big\{\dfrac{\partial f_1}{\partial y}(t) +A_1(t)\dfrac{\partial g_1}{\partial y}(t)-\dfrac{\partial H_1}{\partial y}(t)\Big\}\diffns t\Big]  \notag\\
&+E\Big[\int_0^T z_1(t)  \Big\{\dfrac{\partial f_1}{\partial z}(t) +A_1(t)\dfrac{\partial g_1}{\partial z}(t)-\dfrac{\partial H_1}{\partial z}(t)\Big\}\diffns t\Big] \notag\\
&+E\Big[\int_0^T\int_{\mathbb{R}_0}k_1(t,\zeta) \Big\{\nabla_k f_1(t,\zeta)+A_1(t)\nabla_kg_1(t,\zeta)-\nabla_k H_1(t,\zeta)\Big\}\nu_\alpha(\diffns \zeta) \diffns t\notag\\
&+E\Big[\int_0^T \sum_{j=1}^D v_1^j(t)  \Big\{\dfrac{\partial f_1}{\partial v^j}(t) +A_1(t)\dfrac{\partial g}{\partial v^j}(t)-\dfrac{\partial H}{\partial v^j}(t)\Big\}\diffns t\Big] \notag \\
&+E\Big[\int_0^T \beta_1(t)  \Big\{\dfrac{\partial f_1}{\partial u_1}(t) +A_1(t)\dfrac{\partial g_1}{\partial u_1}(t)+\dfrac{\partial b}{\partial u_1}(t)+\dfrac{\partial \sigma}{\partial u_1}(t)\notag\\
&+\int_{\mathbb{R}_0} r_1(t,\zeta)\dfrac{\partial \gamma}{\partial u_1}(t,\zeta)\nu_\alpha(\diffns  \zeta)
+\sum_{j=1}^D w_1^j(t)\dfrac{\partial \eta^j}{\partial u_1}(t)\lambda_{j}(t) \Big\}\diffns t\Big].
\end{align}
By the definition of $H_1$, the coefficients of the processes $X_1(t),y_1(t),z_1(t), k_1(t,\zeta)$ and $v_1^j(t),\, j=1,\ldots,D,$ are all equal to zero in \eqref{eqIprim311}. We conclude that
\begin{align}
\dfrac{\diffns }{\diffns \ell}J_1^{(u_1+\ell \beta_1,u_2)}(t)\Big. \Big|_{\ell=0}=&E\Big[\int_0^T\dfrac{\partial H_1}{\partial u_1}(t)\beta_1(t) \diff t \Big]\notag\\
=& E\Big[\int_0^TE\Big[\dfrac{\partial H_1}{\partial u_1}(t)\beta_1(t) \Big| \mathcal{E}^{(1)}_{t} \Big]\diff t \Big].
\end{align}
Hence,
$\dfrac{\diffns }{\diffns \ell}J_1^{(u_1+\ell \beta_1,u_2)}(t)\Big. \Big|_{\ell=0}=0$ for all bounded $ \beta_1 \in \mathcal{A}_1$ implies that the same holds in particular for $\beta_1 \in \mathcal{A}_1$ of the form
$$
\beta_1(t)=\beta_1(t,\omega)=\theta_1(\omega)\xi_{[t_0,T]}(t), t\in [0,T]
$$
for a fix $t_0\in [0,T)$, where $\theta_1(\omega)$ is a bounded $\mathcal{E}^{(1)}_{t_0}$-measurable random variable. Therefore
\begin{align}
E\Big[\int_{t_0}^TE\Big[\dfrac{\partial H_1}{\partial u_1}(t) \Big| \mathcal{E}^{(1)}_{t} \Big]\theta_1\diff t \Big]=0.
\end{align}
Differentiating with respect to $t_0$, we have
\begin{align}
E\Big[\dfrac{\partial H_1}{\partial u_1}(t_0)\,\theta_1\Big]=0 \text{ for a.a., } t_0.
\end{align}
Since the equality is true for all bounded $\mathcal{E}^{(1)}_{t_0}$-measurable random variables $\theta_1$, we have
\begin{align}
E\Big[\dfrac{\partial H_1}{\partial u_1}(t_0)|\mathcal{E}^{(1)}_{t_0}\Big]=0 \text{ for a.a., } t_0\in[0,T].
\end{align}
 A similar argument gives that
 $$
E\Big[\dfrac{\partial H_2}{\partial u_2}(t_0)|\mathcal{E}^{(2)}_{t_0}\Big]=0 \text{ for a.a., } t_0\in[0,T],
$$
 under the condition that
 $$\dfrac{\diffns }{\diffns s}J^{(u_1,u_2+\ell \beta_2)}(t)\Big. \Big|_{\ell=0}=0\,\, \text{ for all bounded } \beta_2 \in \mathcal{A}_2.$$

This shows that (1) $\Rightarrow$ (2).

Conversely, using the fact that every bounded $\beta_i \in \mathcal{A}_i$  can be approximated by a linear combinations of controls $\beta_i(t)$ of the form \eqref{eqbeta1}, the above argument can be reversed to show that  (2) $\Rightarrow$ (1).
\end{proof}

\bibliographystyle{plain}
\bibliography{Biblio}

\begin{thebibliography}{10}

\bibitem{BMS07}
G.~Bordigoni, A.~Matoussi, and M.~Schweizer.
\newblock A stochastic control approach to a robust utility maximization
  problem.
\newblock In F.E.~Benth et~al, editor, {\em Stochastic Analysis and
  Applications. The Abel Symposium 2005}, pages 125--151. Springer, 2005.

\bibitem{CoEl10}
S.~N. Cohen and R.~J. Elliott.
\newblock Comparisons for backward stochastic differential equations on markov
  chains and related no-arbitrage conditions.
\newblock {\em Annals of Applied Probability}, 20:267--311, 2010.

\bibitem{Crep}
S.~Crepey.
\newblock {\em About the pricing equations in finance}.
\newblock Springer Berlin, 2010.

\bibitem{Donel2011}
C.~Donnelly.
\newblock Sufficient stochastic maximum principle in a regime-switching
  diffusion model.
\newblock {\em Appl. Math. Optim.}, 64:155--169, 2011.

\bibitem{DonHeu2011}
C.~Donnelly and A.~J. Heunis.
\newblock Quadratic risk minimization in a regime-switching model with
  portfolio constraints.
\newblock {\em SIAM J. Control Optim.}, 50(4):2431--2461, 2012.

\bibitem{DE92}
D.~Duffie and M.~Epstein.
\newblock Stochastic differential utility.
\newblock {\em Econometrica}, 60:353--394, 1992.

\bibitem{EAM94}
R.~J. Elliott, L.~Aggoun, and J.B. Moore.
\newblock {\em Hidden Markov Models: Estimation and Control}.
\newblock Springer, New York, 1994.

\bibitem{ElSi2011}
R.~J. Elliott and T.~K. Siu.
\newblock A stochastic differential game for optimal investment of an insurer
  with regime switching.
\newblock {\em Quantitative Finance}, 11(365-380), 2011.

\bibitem{EZ89}
L.~Epstein and S.~Zin.
\newblock Substitution, risk aversion and the temporal behavior of consumption
  and asset returns: A theoretical framework.
\newblock {\em Econometrica}, 57:937--969, 1989.

\bibitem{FMM10}
W.~Faidi, A.~Matoussi, and M.~Mnif.
\newblock Maximization of recursive utilities: A dynamic maximum principle
  approach.
\newblock {\em SIAM J. Financial Math.}, 2(1):1014--1041, 2011.

\bibitem{FOS05}
N.~Framstad, B.~{\O}ksendal, and A.~Sulem.
\newblock Stochastic maximum principle for optimal control of jump diffusions
  and applications to finance.
\newblock {\em J. Optimization Theory and Appl.}, 121(77-98), 2004.

\bibitem{JMN10}
M.~Jeanblanc, A.~Matoussi, and A.~Ngoupeyou.
\newblock Robust utility maximization in a discontinuous filtration.
\newblock http://arxiv.org/abs/1201.2690, 2012.

\bibitem{LZ13}
Y.~Li and H.~Zheng.
\newblock Weak necessary and sufficient stochastic maximum principle for
  markovian regime-switching diffusion models.
\newblock {\em Appl. Math. Optim.}, DOI 10.1007/s00245-014-9252-6, 2014.

\bibitem{Menou20142}
O.~Menoukeu-Pamen.
\newblock Maximum principles of markov regime-switching forward-backward
  stochastic differential equations with jumps and partial information.
\newblock Technical report, University of Liverpool, 2013.

\bibitem{Menou20141}
O.~Menoukeu-Pamen.
\newblock Optimal control for stochastic delay systems under model uncertainty:
  A stochastic differential game approach.
\newblock {\em J. Optimization Theory and Appl.}, DOI
  110.1007/s10957-013-0484-4, 2013.

\bibitem{OS111}
B.~{\O}ksendal and A.~Sulem.
\newblock Forward--backward stochastic differential games and stochastic
  control under model uncertainty.
\newblock {\em J. Optimization Theory and Appl.}, DOI
  10.1007/s10957-012-0166-7, 2012.

\bibitem{Pen93}
S.~Peng.
\newblock Backward stochastic differential equations and applications to
  optimal control.
\newblock {\em Appl. Math. Optim.}, 27:125--144, 1993.

\bibitem{Rock70}
R.~T. Rockafeller.
\newblock {\em Convex Analysis}.
\newblock Princeton University Press, 1970.

\bibitem{TW12}
R.~Tao and Z.~Wu.
\newblock Maximum principle for optimal control problems of forward-backward
  regime-switching system and applications.
\newblock {\em Systems Control Letter}, 61:911--917, 2012.

\bibitem{Wei90}
P.~Weil.
\newblock Non-expected utility in macroeconomics.
\newblock {\em Quarterly Journal of Economics}, 105:29--42, 1990.

\bibitem{ZES2012}
X.~Zhang, R.~J. Elliott, and T.~K. Siu.
\newblock A stochastic maximum principle for a markov regime-switching
  jump-diffusion model and its application to finance.
\newblock {\em SIAM J. Control Optim.}, 50(2):964--990, 2012.

\end{thebibliography}

\end{document}